\documentclass[pdflatex,sn-mathphys-num]{sn-jnl}

\usepackage{graphicx}%
\usepackage{multirow}%
\usepackage{amsmath,amssymb,amsfonts}%
\usepackage{amsthm}%
\usepackage{mathrsfs}%
\usepackage[title]{appendix}%
\usepackage{xcolor}%
\usepackage{textcomp}%
\usepackage{manyfoot}%
\usepackage{booktabs}%
\usepackage{algorithm}%
\usepackage{algorithmicx}%
\usepackage{algpseudocode}%
\usepackage{listings}%
\usepackage{lipsum}%
\usepackage{bm}%
\usepackage{subcaption}
\usepackage[final]{changes} 
\definechangesauthor[name={Renato}, color=red!70!black]{R}
\theoremstyle{thmstyleone}%
\newtheorem{theorem}{Theorem}
\newtheorem{problem}{Problem}%

\theoremstyle{thmstyletwo}%

\makeatletter
\newtheoremstyle{thmstylethree}%
  {10pt}
  {3pt}
  {\normalfont}
  {}
  {\bfseries}
  {.}
  { }
  {}
\makeatother

\theoremstyle{thmstylethree}%
\newtheorem{remark}{Remark}%
\newtheorem{assumption}{Assumption}%

\begin{document}

\title[Article Title]{Worst-Case Probability Bounds for Finite-Horizon Safety under Moment Uncertainty}


\author*[1]{\fnm{Renato} \sur{Loureiro}}\email{renato.loureiro@ifr.uni-stuttgart.de}

\author[1]{\fnm{Torbj\o rn} \sur{Cunis}}\email{torbjoern.cunis@ifr.uni-stuttgart.de}


\affil[1]{\orgdiv{Institute of Flight Mechanics and Controls}, \orgname{University of Stuttgart}, \orgaddress{\street{Pfaffenwaldring 27}, \city{Stuttgart}, \postcode{70569}, \state{Baden-Württemberg}, \country{Germany}}}




\abstract{This paper addresses the problem of estimating upper bounds on the probability that a dynamical system will enter an undesirable region at some point within a finite time horizon. The primary source of uncertainty lies in the system's initial state, for which only a finite set of moments is known or within a prescribed interval. To tackle this problem, we formulate a measure-based program and propose its relaxation using the moment-sum-of-squares (moment-SOS) framework. The corresponding dual problem is introduced as a functional program, which is subsequently strengthened into a sum-of-squares (SOS) program. Notably, this dual formulation bears a structural resemblance to classical barrier function techniques for certifying system safety, with the key distinction that it yields a probabilistic certificate. 
The effectiveness of the proposed approach is demonstrated through multiple case studies, including a case involving an object in-orbit.
}

\keywords{Dynamical systems, Uncertainty, Convex optimization, Safety}

\maketitle

\clearpage
\section{Introduction}\label{sec1}

In the design and operation of safety-critical systems, robustness and safety assurance are of paramount importance.  
A fundamental challenge is safeguarding the system against entering undesirable or critical regions of the state space. 
This problem is inherently difficult, as it involves chance constraints that couple probability estimation with trajectory optimization. The present paper focuses on the former aspect: estimating probabilities given a partial characterization of the system's initial state uncertainty. 

A common approach to probability estimation are Monte Carlo-like approaches~\cite{tempo_randomized_2005}, typically evaluated at a finite set of future time points. However, the results depend heavily on the assumed sampling distribution, commonly chosen to be a normal distribution with known mean and covariance. Such an approach does not provide a robust safety criterion, as the true initial distribution may deviate from the assumed one. Ideally, one would consider the worst-case initial distribution consistent with the known characteristics, thereby obtaining a conservative and yet a safe probability bound.

Notably, in the field of space safety awareness, the issue of uncertainty propagation is of utmost relevance~\cite{luo_review_2017}. Satellite operators need to accurately assess the risk and apply maneuvers to decrease the risk of a potential collision. 
Typical approaches use very simplistic models and very restrictive assumptions, such as very short time windows~\cite{akella_probability_2000}. To perform less aggressive maneuvers, considering a long time horizon in the risk assessment due to the system uncertainties is a must. But due to the inherent nonlinearity of the system, simple approaches such as linearization and Guassian assumption do not accurately characterize the system even if the initial system uncertainty distribution is fully described by a known Gaussian distribution. This calls for a more sophisticated methodology that enables the use of the nonlinear model without a very coarse approximation \replaced[id=R]{and accommodates non-Gaussian distributions of uncertainty and its propagation. }{and consider not only Gaussian distributions to characterize the uncertainty, or its propagation.}

This paper presents a methodology for estimating the risk of a dynamical system to become unsafe: here defined as the worst-case probability that the system reaches a specified unsafe set over a finite-time horizon due to the initial state uncertainty. Specifically, we aim to determine provable (tight) upper bounds on the worst-case probability that a system, starting from an initial state \replaced[id=R]{that follows a partially known distribution }{drawn from a partially known distribution}, enters the unsafe set within the considered time horizon. 

\clearpage
\subsection{Literature review}

The study of uncertainty and its assessment in dynamical systems is a long-established field that remains, to this day, an area of active research.
Some work has been done specifically on stochastic dynamical system, where the goal is to determine first exit time probabilities~\cite{kushner_stochastic_1967}. These rely on stochastic Lyapunov function to determine some upper bounds on the exit time probability. In a more recent work, an approach to attain upper bounds on the probability that the system enters some unsafe set is done via a barrier function formulation~\cite{santoyo_barrier_2019}. On the same line, but with a slightly different goal, in~\cite{miller_unsafe_2026} the authors formulate a linear program over measures to attain the worst-case probability under stochastic dynamics; although only tested on very small academic examples. Moreover, in~\cite{jasour_semidefinite_2015,jasour_convex_2017} the authors work intensively on tractable formulations by means of convex programs to handle chance constraints, e.g., these could represent some probability of being unsafe.

A related topic is peak estimation, which consists in estimating bounds on quantities of interest along the trajectories of a known dynamic system starting from a set~\cite{miller_peak_2021}, e.g., these quantities can be the maximum deviation in some direction. 

Similar studies are present in the literature, such as the exit time probability~\cite{miller_unsafe_2026}, where the intent is to estimate for a fixed time window and for an initial state, how likely it is for the system characterized by a stochastic differential equation to leave the safe set.

Another interesting approach to uncertainty propagation is presented in~\cite{fantuzzi_bounds_2016}, where the authors develop convex programs to obtain upper and lower bounds on functions of interest along the trajectories starting from an initial set, e.g., finding the maximum deviation of the mean position of a system at a certain time.

Similarly, but to estimate the region of attraction of a polynomial dynamical system, in~\cite{henrion_convex_2013-1} the authors rely on measures and their propagation through the dynamics to infer the region of attraction, through means of a convex program.
In~\cite{shen_probabilistic_2025}, the authors also tackle risk assessment, but tailored to discrete-time systems, where the goal is to obtain probabilistic reachable sets. This method is sample-based, and tackles stochastic dynamical systems.

The content of this paper is similar to~\cite{miller_unsafe_2026}, where the worst case probability is also estimated by means of a moment relaxation, specifically for stochastic dynamical systems governed by Itô diffusion processes~\cite{oksendal_stochastic_1998}. There, uncertainty enters through the injection of disturbances into the system along the time-window. Although the authors note that uncertainty in the initial measure can be incorporated into the optimization framework, they do not elaborate on the resulting formulation. In contrast, the present paper places this very issue at center stage: we systematically investigate how the underlying convex programs are structured when the initial state is subject to uncertainty, thereby offering a detailed and unified treatment that complements and extends the earlier analysis.
We go one step further and study the case where the moments are not fully known. Specifically, we assume certain bounds on the moments, and wish to get some tight upper bound on the worst case probability under these assumptions.

\vspace{0.3cm}
\noindent\textbf{Contributions.}
This work concerns \replaced[id=R]{}{with} the treatment of system uncertainty and how to quantify safety of the system over a finite-time horizon by means of estimation of a (potentially) tight upper bound on the worst-case probability. In summary: 
\begin{enumerate}
	\item We formulate the problem of determining the worst-case probability of unsafe behavior, i.e., entering an undesired region, under partial knowledge of the initial system uncertainty, as a measure program~(Sect.~\ref{sec:measure_program}), and propose a tractable relaxation to a moment program~(Sect.~\ref{sec:moment_program}).
	\item We derive the dual program of the measure program, leading to a functional program, and establish a connection to barrier-based safety certification for deterministic systems~(Sect.~\ref{sec:functional_program})\textemdash Bridging measure-theoretic programming with classical barrier methods, typically written as a functional program. Later, a strengthening of the attained functional program is shown by using sum-of-squares polynomials and the generalized $\mathcal{S}$-procedure~(Sect.~\ref{sec:functional_program}).
	\item We extend the approach to ambiguity sets, where only bounds on moments of the unknown distribution are known~(Sect.~\ref{sec:ambiguity}).
\end{enumerate}

It is important to clarify that this paper does not aim to develop new theoretical advances in uncertainty propagation \textit{per se}; all the optimization problems we present could be easily reconstructed from the existing work in the literature. Our primary contribution lies in tailoring these formulations to a specific class of dynamical systems; namely, those in which uncertainty, or the sole source of randomness, originates exclusively from the initial state. In doing so, we hope to provide a self-contained and accessible reference that compiles, in a clear and concise manner, the concrete programs required to address the risk assessment problem stated above.

\vspace{0.3cm}
\noindent\textbf{Outline.}
The paper is organized as follows. The problem of interest is defined in Sect.~\ref{sec:problem_setup}. Its linear reformulation with occupation measures is described in Sect.~\ref{sec:core}: the concept of occupation measure is formally defined in Sect.~\ref{sec:occu_measure}; the connection between system dynamics and measures is established in Sect.~\ref{sec:liouville_eq} via the Liouville equation; the weak formulation of the problem in terms of nonnegative measures is presented in Sect.~\ref{sec:measure_program}, alongside its dual form in Sect.~\ref{sec:functional_program}, i.e., the functional program. In Sect.~\ref{sec:moment_program}, the relaxation of the measure program is presented alongside the stricter version of the functional program by relying on moment-sum-of-squares hierarchy and sum-of-squares programming, respectively. Sect.~\ref{sec:ambiguity} enhances the base program by considering ambiguity sets on the noncentral moments. Application of the algorithms and numerical examples are described in Sect.~\ref{sec:application}. Concluding remarks are gathered in Sect.~\ref{sec:conclusion}.

\clearpage
\noindent\textbf{Notation.} Let us recall some standard notation and some specific notation related to measure and functional programming. 
Let $\mathbb{N}$ and $\mathbb{R}$ denote the set of nonnegative integers and real numbers, respectively. For $n\in\mathbb{N}$, let $\mathbb{N}^n$ and $\mathbb{R}^n$ denote the $n$-dimensional nonnegative integer tuple and the Euclidean space, respectively. For $k\in \mathbb{N}$, we refer to $\mathbb{N}^n_k$ and $[k]$ as the elements $\bm{\alpha} \in \mathbb{N}^n$ with $|\bm{\alpha}| = \sum_{i=1}^n \alpha_i \leq k$ and the set $\{1,\ldots,k\}$, respectively. 
We denote by $\mathcal{C}(\bm{K})$ the set of all real-valued continuous functions $f:\bm{K}\to\mathbb{R}$, and by $\mathcal{C}_+(\bm{K})$ its nonnegative subcone. Moreover, we denote by $\mathcal{C}^k(\bm{K})$ the set of all $k$-times continuously differentiable functions on $\bm{K}$.
The set of nonnegative Borel measures supported in $\bm{K}$ is $\mathcal{M}_+(\bm{K})$ and the vector space of signed Borel measures supported in $\bm{K}$ is $\mathcal{M}(\bm{K})=\mathcal{M}_+(\bm{K})-\mathcal{M}_+(\bm{K})$.

The sets $\mathcal{C}_+(\bm{K})$ and $\mathcal{M}_+(\bm{K})$ are in topological duality when $\bm{K}$ is compact, and they admit a duality pairing $\langle \cdot, \cdot \rangle$ by the Lebesgue integration: 
\begin{align*}
	\forall f \in \mathcal{C}_+(\bm{K}), \, \mu \in \mathcal{M}_+(\bm{K}): \quad \langle f, \mu \rangle = \int_{\bm{K}} f(\mathbf{s})\, \mathrm{d}\mu(\mathbf{s}). 
\end{align*}

On the space of signed measures $\mathcal{M}(\bm{K})$, we define the total variation norm by 
\begin{equation*}
	\|\mu\|_{\mathrm{TV}} = \inf \{ \langle 1, \mu^+ \rangle + \langle 1, \mu^- \rangle : \mu^+, \mu^- \in \mathcal{M}_+(\bm{K}), \,\, \mu^+ - \mu^- = \mu\}.
\end{equation*}
For nonnegative measure $\mu \in \mathcal{M}_+(\bm{K})$, this reduces to $\| \mu \|_{\mathrm{TV}} = \langle 1, \mu \rangle$, which we call the mass of $\mu$. A nonnegative measure with mass $1$, i.e., $\langle 1, \mu \rangle = 1$, is said to be a probability measure. 
The product measure between $\mu \in \mathcal{M}_+(\bm{K}_1)$ and $\nu \in \mathcal{M}_+(\bm{K}_2)$ is the unique measure $\mu \otimes \nu$ satisfying $\forall \bm{A}_1  \subseteq \bm{K}_1, \bm{A}_2\subseteq \bm{K}_2: (\mu \otimes \nu) (\bm{A}_1 \times \bm{A}_2) = \mu(\bm{A}_1) \nu (\bm{A}_2)$. Take $\mu,\nu \in \mathcal{M}_+(\bm{K})$, we say that $\nu$ is dominated by $\mu$, denoted by $\nu \leq \mu$, if $\nu(\bm{A}) \leq \mu(\bm{A})$ for all subsets $\bm{A} \subset \bm{K}$. Equivalently, it means that there exists a unique $\hat{\nu}\in\mathcal{M}_+(\bm{K})$ such that $\forall \bm{A}\subset \bm{K}: \nu(\bm{A}) + \hat{\nu}(\bm{A}) = \mu(\bm{A})$, equivalently, in shorthand notation, $\nu + \hat{\nu}=\mu$.

Let $\mathbb{R}[\mathbf{x}]$ be the ring of polynomials in the variables $\mathbf{x}=(x_1, \ldots, x_n)$ and let $\mathbb{R}_d[\mathbf{x}]$ be the  vector space of polynomials of degree at most $d$. A monomial $x_1^{\alpha_1}\cdots x_n^{\alpha_n}$ is expressed in the multi-index notation as $\mathbf{x}^{\bm{\alpha}}$.
A polynomial $p\in \mathbb{R}_d[\mathbf{x}]$ is written as 
\begin{align*}
	\mathbf{x} \mapsto p(\mathbf{x}) = \sum\nolimits_{\bm{\alpha}\in\mathbb{N}^n} p_{\bm{\alpha}} \mathbf{x}^{\bm{\alpha}}
\end{align*}
for some finite-dimensional vector of coefficients $\mathbf{p}=(p_{\bm{\alpha}})_{\bm{\alpha} \in \mathbb{N}_d^n}$. We denote the degree of a polynomial $p$ as $\operatorname{deg} p = \max \{|\bm{\alpha}| : p_{\bm{\alpha}} \ne 0 \}$.
Denote by $\Sigma[\mathbf{x}]\subset \mathbb{R}[\mathbf{x}]$ the subset of real-valued polynomials that are \textit{sum-of-squares} (SOS) polynomials, i.e., 
\begin{align*}
	\Sigma[\mathbf{x}] = \{ p \in \mathbb{R}[\mathbf{x}] \, : \, p(\mathbf{x}) = \sum\nolimits_{i=1}^m f_i^2(\mathbf{x}), \, f_1, \ldots, f_m \in \mathbb{R}[\mathbf{x}], \, m\in\mathbb{N} \}.
\end{align*}
Similar to $\mathbb{R}_d[\mathbf{x}]$, we denote $\Sigma_d[\mathbf{x}]$ as the set of SOS polynomials of degree of at most $d$.

\section{Problem setup}%
\label{sec:problem_setup}

We consider the continuous-time dynamical system described by the ordinary differential equation (ODE)
\begin{align}%
	\label{eq:system_original}
	\begin{aligned}
		\dot{\mathbf{x}}(t) = f(\mathbf{x}(t),t), \quad t\in \mathbb{R}_{\geq 0},
	\end{aligned}
\end{align}
where the state $\mathbf{x}(t)$ takes values in a compact set $\bm{X}\subset \mathbb{R}^n$. The vector field $f:\bm{X} \times \mathbb{R}_{\geq 0} \to \mathbb{R}^n$ is assumed to be smooth.
\replaced{Let $\bm{X}_0\subset \bm{X}$ be the set of initial states.}{}

For a given initial state $\mathbf{x}_0 \in \bm{X}_0$ \added{and $T\in \mathbb{R}_{\geq 0}$}, let 
\begin{align*}
	\psi(\cdot \, | \, \mathbf{x}_0): [0,T] \to \mathbb{R}^n
\end{align*}
denote the unique solution to~\eqref{eq:system_original} satisfying $\psi(0 \, |\, \mathbf{x}_0)=\mathbf{x}_0$. Such a solution is called an admissible trajectory if it is defined on the entire interval $\bm{T}=[0,T]$ and remains within $\bm{X}$ for all $t\in \bm{T}$.  The set of all admissible trajectories emanating from the initial set $\bm{X}_0$ is then defined as the union
\begin{align*}
	\bm{\Psi}(\bm{X}_0, T) = \bigcup_{\mathbf{x}_0 \in \bm{X}_0} \{ \psi(\cdot\, | \, \mathbf{x}_0) \}.
\end{align*}

This collection is endowed with a weighting induced by the initial probability measure $\mu_0$ supported on $\bm{X}_0$, which captures the uncertainty in the initial state.

The available information on the initial state is encoded through a finite set of known noncentral moments of $\mu_0$. For a multi-index $\bm{\alpha}=(\alpha_1, \ldots, \alpha_n) \in \mathbb{N}^n$, the $\bm{\alpha}$-noncentral moment of the measure $\mu_0$ is given by
\begin{equation*}	
	b_{\bm{\alpha}} = \langle \mathbf{x}^{\bm{\alpha}}, \mu \rangle = \langle x_1^{\alpha_1} \cdots x_n^{\alpha_n}, \mu \rangle.
\end{equation*}

We assume that for a finite index set $\bm{A}$, the moments $b_{\bm{\alpha}}$ are known exactly. Moreover, since $\mu_0$ is a probability measure, its zeroth-order moment satisfies 
\begin{align*}
	\langle 1, \mu_0 \rangle = \mu_0(\bm{X}_0) = 1.
\end{align*}

Given this setup, we aim to compute the worst-case probability that the system state enters an unsafe set $\bm{K}\subset \bm{X}$ at some time $t\in \bm{T}$, under all probability measures $\mu_0$ compatible with the prescribed moment information. This leads to the following optimization problem.

\begin{problem}%
	\label{prob:main}
	The principal problem of interest is
\begin{subequations}
	\label{eq:problem}
	\begin{align}
		\mathfrak{P}^\star \, = \sup_{\substack{\mu_0 \in \mathcal{M}_+(\bm{X}_0), \\ \, \tau \in \bm{T}}} \, \, & \int_{\bm{K}} \psi(\tau \, | \, \mathbf{x}_0)~\mathrm{d}\mu_0(\mathbf{x}_0)   \label{eq:prob}\\
			\mathrm{s.t.} \quad \, \, \, 
			& \langle \mu_0, \mathbf{x}^{\bm{\alpha}} \rangle  = b_{\bm{\alpha}} \quad \mathrm{for} \quad \bm{\alpha} \in \bm{A}, \label{subeq:known-moments} \\
			& \langle \mu_0, 1 \rangle = 1. \label{eq:mass_one}
	\end{align}
\end{subequations}
\end{problem}

Here, the integral in~\eqref{eq:prob} equals the probability that the state at time $\tau$ lies in $\bm{K}$.
The supremum is taken over all initial probability measures $\mu_0$ satisfying the moment constraints~\eqref{subeq:known-moments} and over all possible times $\tau \in \bm{T}$. The constraints~\eqref{subeq:known-moments} and~\eqref{eq:mass_one} ensure consistency with the available statistical information and the normalization of the probability measure, respectively.

Problem~\ref{prob:main} is an infinite-dimensional, nonconvex optimization problem over the space of probability measures. Its intrinsic difficulty arises from three compounding factors:
\begin{enumerate}
	\item the decision variable is a measure $\mu_0 \in \mathcal{M}_+(\bm{X}_0)$, rendering the feasible set infinite-dimensional;
	\item the flow map $\psi(\tau \, |\, \mathbf{x}_0)$ is a nonlinear function of $\mathbf{x}_0$, consequently, convexity in the space of moments is generally not preserved.
	\item the supremum over $\tau\in\bm{T}$ introduces a continuum of temporal constraints.
\end{enumerate}

To render Problem~\ref{prob:main} computationally tractable, we adopt the relaxation framework pioneered in~\cite{vinter_equivalence_1978}, which circumvents the intractable dependence on individual trajectories by reformulating the dynamics in terms of the evolution of probability measures. Specifically, instead of enforcing the ODE constraint~\eqref{eq:system_original} point-wise along each trajectory, we work with the Liouville equation (Sect.~\ref{sec:liouville_eq}) which governs the time evolution of the measure induced by the flow~\eqref{eq:system_original}. This lifts the problem to the space of measures and replaces the nonlinear flow map with a linear, albeit infinite-dimensional, partial differential constraint. 

To solve the infinite-dimensional linear programming problem numerically, we employ Lasserre's moment-sum-of-squares hierarchy~\cite{lasserre_global_2001}. By doing so, under some mild assumptions, a sequence of finite-dimensional semidefinite programs (SDPs) is generated whose optimal values converge monotonically to the optimum $\mathfrak{P}^\star$ of Problem~\ref{prob:main}. In this manner, the original problem is approximated by a hierarchy of convex SDPs that are amenable by off-the-shelf solvers, providing provable upper bounds on the worst-case unsafe probability at each relaxation level. A detailed explanation is provided in Sect.~\ref{sec:moment_program}.

\begin{remark}
	Problem~\ref{prob:main} can be easily extended to account for uncertainties in the moment information. This additional uncertainty is quite relevant since it allows even for the study of the sensitivity of the problem against small deviations in the supposed known information.
	Notably, this extension does not add any structural difficulty while treating the Problem~\ref{prob:main} or its relaxation in terms of occupation measures. A detailed treatment is presented in detail in Sect.~\ref{sec:ambiguity}
\end{remark}

\clearpage
\section{Preliminaries}%
\label{sec:core}

In this section, we introduce the concept of occupation measures and establish its connection to the trajectories of the dynamical system~\eqref{eq:system_original}. This formalism provides a natural framework for propagating initial uncertainty through the dynamics.

\subsection{Occupation measure}%
\label{sec:occu_measure}

Let $\mu_0 \in \mathcal{M}_+(\bm{X}_0)$ be a given initial probability measure, which we denote as the \textit{initial measure}. For a fixed initial condition $\mathbf{x}_0$, we define the \textit{conditional occupation measure} $\mu_{\mathrm{c}}(\cdot \, | \, \mathbf{x}_0) \in \mathcal{M}_+(\bm{T} \times \bm{X})$ as a measure that satisfies the equality
\begin{align*}
	\mu_{\mathrm{c}}(\bm{A}\times \bm{B}\, | \, \mathbf{x}_0) = \int_{\bm{T}} \mathbf{1}_{\bm{A}\times \bm{B}}(t,\psi(t \, |\, \mathbf{x}_0)) \, \mathrm{d}t,
\end{align*}
for all Borel sets $\bm{A}\subseteq \bm{T}$ and $\bm{B}\subseteq \bm{X}$, where $\mathbf{1}_{\bm{A}\times \bm{B}}$ denotes the indicator function of $\bm{A}\times \bm{B}$. Averaging over the initial distribution $\mu_0$, we obtain the~\textit{occupation measure} $\mu \in \mathcal{M}_+(\bm{T} \times \bm{X})$ defined by 
\begin{align*}
	\mu(\bm{A}\times \bm{B}) = \int_{\bm{X}_0} \mu_{\mathrm{c}}(\bm{A} \times \bm{B} \, | \, \mathbf{x}_0) \, \mathrm{d}\mu_0(\mathbf{x}_0).
\end{align*}
Thus, $\mu(\bm{A}\times \bm{B})$ quantifies the $\mu_0$-weighted total time that trajectories of the dynamical system~\eqref{eq:system_original} spend in the set $\bm{B}$ during the time interval $\bm{A}$. 

Additionally, we define the \textit{terminal measure} $\mu_{\mathrm{T}} \in \mathcal{M}_+(\bm{X})$, which characterizes the distribution of the state at the final time $T$ under the flow generated by~\eqref{eq:system_original}, as
\begin{align*}
	\mu_{\mathrm{T}}(\bm{B}) = \int_{\bm{X}_0} \mathbf{1}_{\bm{B}}(\psi(T \, | \, \mathbf{x}_0))\mathrm{d}\mu_0(\mathbf{x}_0) \quad \text{for all} \quad \bm{B} \subseteq \bm{X}.
\end{align*}

In summary, the occupation measure $\mu$ captures the time-averaged behavior of the flow defined by~\eqref{eq:system_original} with respect to~$\mu_0$, while the terminal measure $\mu_{\mathrm{T}}$ encodes the pushforward of the initial measure under the flow map~\eqref{eq:system_original} at time $T$. These measures serve as the fundamental objects in the measure-relaxation framework that follows; for a comprehensive treatment, see, e.g.,~\cite{miller_peak_2021}.

\subsection{Liouville equation}%
\label{sec:liouville_eq}

The occupation measures introduced in the preceding section are not independent \replaced{of each other}{}; they are linked through a fundamental conservation law that governs the transport of measures along the flow of the dynamical system~\eqref{eq:system_original}. This connection is captured by the Liouville equation, which we now outline.

Given~$\psi(\cdot \, | \, \mathbf{x}_0)$ denote the unique admissible trajectory of~\eqref{eq:system_original} emanating from $\mathbf{x}_0 \in \bm{X}_0$. For any test function $v\in \mathcal{C}^1(\bm{T} \times \bm{X})$, the chain rule along trajectories yields
\begin{equation}
\label{eq:chain_rule}
\begin{aligned}
	\frac{\mathrm{d}}{\mathrm{d}t} v(t, \psi(t \mid \mathbf{x}_0)) 
	&= \mathcal{L}_f v(t, \psi(t \mid \mathbf{x}_0)),
\end{aligned}
\end{equation}
where the linear operator $\mathcal{L}_f:\mathcal{C}^1(\bm{T}\times \bm{X}) \to \mathcal{C}(\bm{T} \times \bm{X})$ is defined as 
\begin{align}
\label{eq:lie_derivative}
\mathcal{L}_f: v \mapsto \frac{\partial v}{\partial t} + \sum_{i=1}^n \frac{\partial v}{\partial x_i} f_i.
\end{align}
This operator coincides with the Lie derivative of $v$ along the vector field \replaced[id=R]{$f$}{~\eqref{eq:system_original}}.

Integrating~\eqref{eq:chain_rule} over $t\in \bm{T}$ and averaging over the initial measure $\mu_0$, we obtain the weak (integral) form of the Liouville equation:
\begin{align}
\label{eq:liouv_weak}
\int_{\bm{X}} v(T, \mathbf{x})  \, \mathrm{d}\mu_{\mathrm{T}}(\mathbf{x}) - \int_{\bm{X}_0} v(0, \mathbf{x}) \, \mathrm{d}\mu_0(\mathbf{x})
= \int_{\bm{T} \times \bm{X}} \mathcal{L}_f v(t, \mathbf{x}) \, \mathrm{d}\mu(t,\mathbf{x}),
\end{align}
for every $v\in\mathcal{C}^1(\bm{T}\times \bm{X})$. In the more compact bracket notation, this reads
\begin{align}
	\label{eq:liouv}
	\langle v(T,\cdot), \mu_{\mathrm{T}} \rangle = \langle v(0, \cdot), \mu_0 \rangle + \langle \mathcal{L}_f v, \mu\rangle.
\end{align}
for every $v\in\mathcal{C}^1(\bm{T}\times \bm{X})$.
Equivalently, in distributional form, the Liouville equation can be written as
\begin{align}
	\label{eq:liuoville_eq_brief}
	\delta_T \otimes \mu_{\mathrm{T}} = \delta_{0} \otimes \mu_0 + \mathcal{L}_f^\dagger \mu	
\end{align}
where $\delta_\tau$ denotes the Dirac measure at a point $\tau$, $\mathcal{L}_f^\dagger$ denotes the adjoint operator of $\mathcal{L}_f$ with respect to the duality pairing $\langle \cdot, \cdot \rangle$, i.e., $\mathcal{L}_f^\dagger$ is the unique operator satisfying 
\begin{align*}
	\forall v \in \mathcal{C}^1(\bm{T} \times \bm{X}), \, \mu \in \mathcal{M}_+(\bm{T} \times \bm{X}):\, \langle \mathcal{L}_fv, \mu \rangle = \langle v, \mathcal{L}_f^\dagger \mu \rangle.
\end{align*}

The measures $(\mu_0, \mu_{\mathrm{T}}, \mu)\in \mathcal{M}_+(\bm{X}_0) \times  \mathcal{M}_+(\bm{X}) \times \mathcal{M}_+(\bm{T}\times \bm{X})$ are precisely those defined in Sect.~\ref{sec:occu_measure}, the initial measure, the terminal measure and the occupation measure, respectively. Equation~\eqref{eq:liouv} is classical in fluid mechanics and statistical physics, where it governs the transport of particle densities in a flow. It characterizes the entire family of admissible trajectories emanating from the initial distribution $\mu_0$. Fig.~\ref{fig:visualize-liouv} provides a schematic illustration of this evolution.

\begin{remark}[Free terminal time]
	\label{remark:free}
	The Liouville equation \eqref{eq:liouv} can be generalized to the case where the terminal time is not fixed a priori. If the terminal measure is defined on the augmented space $\mu_{\mathrm{T}} \in \mathcal{M}_+(\bm{T} \times \bm{X})$, then the equation becomes
	\begin{align}
		\label{eq:liouville_short_free}
		\mu_{\mathrm{T}} = \delta_0 \otimes \mu_0 + \mathcal{L}_f^\dagger \mu,
	\end{align}
	which remains linear in the measures. This flexibility is particularly useful when optimizing over the time $\tau \in \bm{T}$, as sought in Problem~\ref{prob:main}. For further details see, e.g.,~\cite{miller_peak_2021,covella_uncertainty_2023,henrion_convex_2013-1}.
\end{remark}

\begin{remark}
	It is relevant to highlight that the triplet $(\mu_0,\mu_{\mathrm{T}},\mu)$ that satisfies~\eqref{eq:liouville_short_free} is a relaxed \replaced[id=R]{}{occupation} measure~\cite{miller_peak_2024}, i.e., the set of triplets that fulfill the condition~\eqref{eq:liouville_short_free} may be larger than the set of \replaced[id=R]{}{occupations} measures that are generated from the dynamics.
\end{remark}

The Liouville equation provides a linear relationship between the initial measure $\mu_0$, the terminal measure $\mu_{\mathrm{T}}$, and occupation measure $\mu$, thereby circumventing the nonlinearity inherent in the individual trajectories of the system~\eqref{eq:system_original}. Equipped with the measure-theoretic tools presented in this section, we are now in a position to formulate the infinite-dimensional linear program over measures that will serve as the foundation for tackling Problem~\ref{prob:main}. This reformulation is presented in the following section.

\begin{figure}[h]
    \centering
	\begin{minipage}{0.45\textwidth}
    	\includegraphics[width=0.9\linewidth]{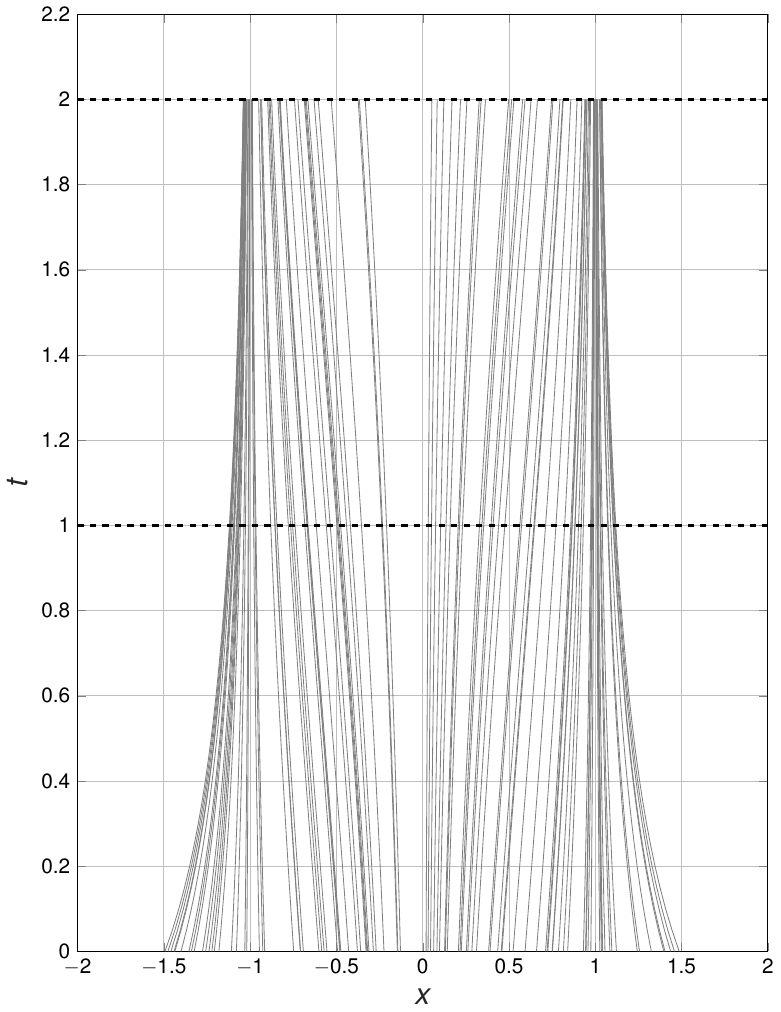}
	\end{minipage}%
	\begin{minipage}{0.01\textwidth}
		\hfill
	\end{minipage}%
	\begin{minipage}{0.48\textwidth}
		\includegraphics[width=0.9\linewidth]{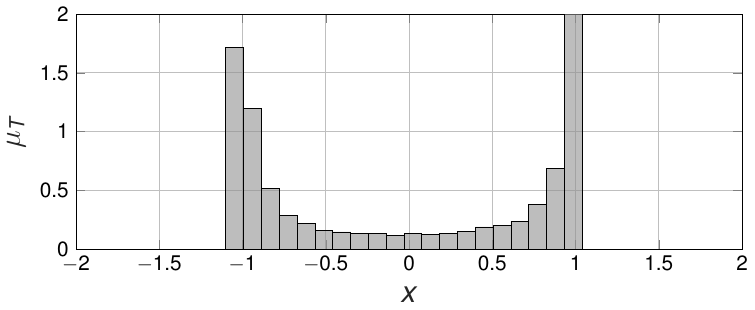}
		\includegraphics[width=0.9\linewidth]{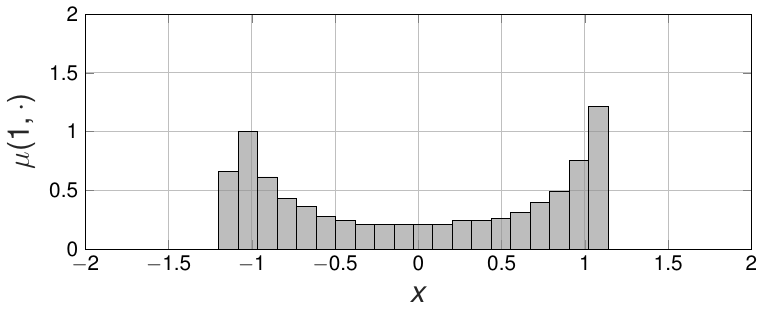}
		\includegraphics[width=0.9\linewidth]{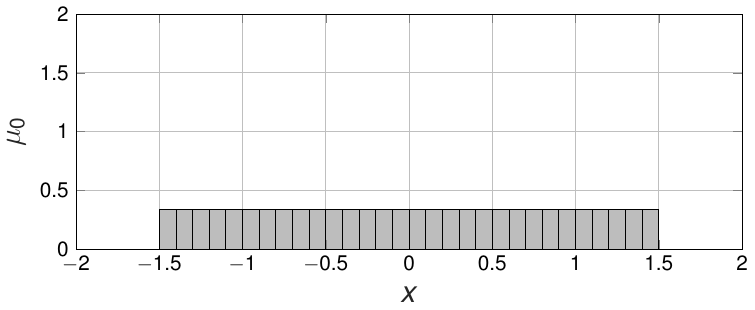}
	\end{minipage}
    \caption{Illustration of the evolution of the initial probability measure $\mu_0$, defined here as the uniform distribution on the interval $[-1.5, 1.5]$,  under the flow map $\dot{x}(t)=0.5x(1-x^2)$ up until time $T=1$.}%
    \label{fig:visualize-liouv}
\end{figure}

\section{Infinite-dimensional linear program}%
\label{sec:infinite_dimensional_program}

In this section, we formulate the probabilistic safety problem as an infinite-dimensional linear program over measures, and subsequently derive its dual functional counterpart.

\subsection{Measure program}%
\label{sec:measure_program}

The objective of this section is to construct a convex relaxation of Problem~\ref{prob:main} that yields a provable upper bound on the worst-case unsafe probability. To this end, we build upon the measure-theoretic framework developed in~\cite{henrion_approximate_2009}, which introduces an infinite-dimensional linear program over measures for approximating the indicator function of a set.

We begin by recalling a fundamental result from~\cite[Theorem 3.1]{henrion_approximate_2009}. Given a set $\bm{K}\subseteq \bm{X}$ and a measure $\mu_2\in\mathcal{M}_+(\bm{X})$, under some mild assumptions, the optimal value of the linear program 
\begin{align}
	\label{eq:chance-upper}
	\sup_{\mu_1} \, \{ \langle 1, \mu_1 \rangle \, : \, \mu_2 - \mu_1 \in \mathcal{M}_+(\bm{X}), \, \mu_1 \in \mathcal{M}_+(\bm{K})\}
\end{align}
\replaced{equates to }{}$\mu_2(\bm{K})=\langle \mathbf{1}_{\bm{K}}, \mu_2 \rangle $. In other words, the supremum of the mass of $\mu_1$ supported on $\bm{K}$, subject to the constraint that $\mu_2 -\mu_1$ remains a nonnegative measure on $\bm{X}$, exactly equates the mass that $\mu_2$ assigns to $\bm{K}$.

In our setting, we adapt this construction to the terminal measure $\mu_{\mathrm{T}}$ obtained from the Liouville equation~\eqref{eq:liouv}. Specifically, we introduce an auxiliary measure $\mu_{\mathrm{a}}\in\mathcal{M}_+(\bm{K})$ supported on the unsafe set $\bm{K}$, and we seek to maximize its total mass $\mu_{\mathrm{a}} (\bm{K})=\langle 1, \mu_{\mathrm{a}} \rangle $. 
By adding the constraint $\mu_{\mathrm{T}}-\mu_{\mathrm{a}} \in \mathcal{M}_+(\bm{X})$, we ensure that $\mu_{\mathrm{a}}$ is dominated by $\mu_{\mathrm{T}}$. 
Under some regularity assumptions, 
for a fixed final time $\tau \in  \bm{T}$, maximizing $\langle 1, \mu_{\mathrm{a}} \rangle$ over all admissible $(\mu_0, \mu_{\mathrm{T}}, \mu)$ yields the optimal value of Problem~\ref{prob:main}, i.e., $\mathfrak{P}^\star=\langle 1, \mu_{\mathrm{a}}^\star  \rangle $.
In words, the optimum value corresponds precisely to the worst-case probability that the state \replaced{at time $\tau$}{} lies in $\bm{K}$. This \replaced[id=R]{will be}{is} formally stated and proved in Theorem~\ref{theorem:no_gap}.

However, we are not interested in a fixed terminal time worst-case probability, we require the terminal time to be free within the interval $\bm{T}$. Hence, following Remark~\ref{remark:free}, we can easily do this by considering an augmented terminal measure $\mu_{\mathrm{T}} \in \mathcal{M}_+(\bm{T} \times \bm{X})$ and project it onto $\bm{X}$ via the marginalization operator~\cite{miller_safety_2023} introduced next.

Given $\eta \in \mathcal{M}_+(\bm{T}\times \bm{X})$, the projection-pushforward \replaced{operator}{} $\pi_\#^{\mathbf{x}}$ expresses the $\mathbf{x}$-marginal of $\eta$ with duality pairing
\begin{align*}
	\forall f \in \mathcal{C}(\bm{X}): \quad \langle f, \pi_\#^{\mathbf{x}} \eta \rangle = \int_{\bm{T}\times \bm{X}} f(\mathbf{x}) \, \mathrm{d}\eta(\mathbf{x},t).
\end{align*}
\replaced[id=R]{}{this is also called the $\mathbf{x}$-marginalization of a measure $\eta$.}
We are now in a position to state the central measure program. It reads \replaced[id=R]{}{as follows.}

\begin{subequations}%
	\label{eq:measure-program}
	\begin{align}
		P^\star=\sup_{\mu, \mu_0, \mu_{\mathrm{T}}, \mu_{\mathrm{a}}} \quad &\langle 1, \mu_\mathrm{a} \rangle \\
			\mathrm{s.t.} \qquad  & \mu_{\mathrm{T}} =  \delta_0 \otimes \mu_0 + \mathcal{L}^\dagger \mu \label{seq:liouv}\\
			& \langle \mu_0, 1 \rangle = 1 \label{seq:probability_1}\\
			& \langle \mu_0, \mathbf{x}^{\bm{\alpha}} \rangle = b_{\bm{\alpha}} \quad \mathrm{for} \quad \bm{\alpha} \in \bm{A} \label{seq:moment_cct}\\
			& \pi_\#^{\mathbf{x}} \mu_{\mathrm{T}} - \mu_{\mathrm{a}} \in \mathcal{M}_+(\bm{X}) \label{eq:domination_constraint} \\
			& \mu_0 \in \mathcal{M}_+(\bm{X}_0) \\
			& \mu_{\mathrm{a}} \in \mathcal{M}_+(\bm{K}) \label{seq:mu_a_K}\\
			& \mu, \mu_{\mathrm{T}} \in \mathcal{M}_+(\bm{T} \times \bm{X})
	\end{align}
\end{subequations}

Program~\eqref{eq:measure-program} is formulated for the free terminal time case, as reflected by the presence of the time variable in $\mu_{\mathrm{T}}$ and the \replaced[id=R]{projection-pushforward}{marginalization} operator $\pi_\#^\mathbf{x}$ in~\eqref{eq:domination_constraint}. The fixed terminal time case is recovered by taking $\mu_{\mathrm{T}}\in\mathcal{M}_+(\bm{X})$, i.e., independent of time, replacing $\pi_\#^\mathbf{x}\mu_{\mathrm{T}}$ with $\mu_{\mathrm{T}}$ directly, and using the Liouville equation~\eqref{eq:liuoville_eq_brief} for a fixed terminal time.

The proof follows the same lines as~\cite[Theorem 1]{miller_unsafe_2026}, but with three key modifications: we replace the stochastic dynamical system with a deterministic one, impose additional moment conditions on the initial measure, and handle the extra constraint associated with the introduced auxiliary variable $\mu_\mathrm{a}$. 

\begin{theorem}%
	\label{theorem:measure_program}
	The infinite-dimensional linear program over measures~\eqref{eq:measure-program} provides an upper bound on the optimal value $\mathfrak{P}^\star$ of Problem~\ref{prob:main}.
\end{theorem}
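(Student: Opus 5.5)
The plan is the standard feasibility-embedding argument. For every pair $(\mu_0,\tau)$ that is feasible for Problem~\ref{prob:main}, I will build a feasible point of~\eqref{eq:measure-program} whose objective equals the Problem~\ref{prob:main} objective at $(\mu_0,\tau)$. Taking suprema then gives $P^\star \ge \mathfrak{P}^\star$. Throughout I read the objective~\eqref{eq:prob} as $\mu_0(\{\mathbf{x}_0 : \psi(\tau\,|\,\mathbf{x}_0)\in\bm{K}\})$, which is the probability that the state at time $\tau$ lies in $\bm{K}$, as the paper states. I assume, as in the problem setup, that all trajectories from $\bm{X}_0$ are admissible on $\bm{T}$.

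\textbf{Step 1 (construction).} Fix a feasible $\mu_0$ and a time $\tau\in\bm{T}$. I will use the horizon $[0,\tau]$ and define the following measures:
\begin{itemize}
\item the occupation measure $\mu(\bm{A}\times\bm{B}) = \int_{\bm{X}_0}\int_0^\tau \mathbf{1}_{\bm{A}\times\bm{B}}(t,\psi(t\,|\,\mathbf{x}_0))\,\mathrm{d}t\,\mathrm{d}\mu_0(\mathbf{x}_0)$;
\item the time-augmented terminal measure $\mu_{\mathrm{T}} = \delta_\tau\otimes \nu_\tau$, where $\nu_\tau$ is the pushforward of $\mu_0$ under the flow map $\mathbf{x}_0\mapsto\psi(\tau\,|\,\mathbf{x}_0)$;
\item the auxiliary measure $\mu_{\mathrm{a}} = \nu_\tau|_{\bm{K}}$, the restriction of $\nu_\tau$ to $\bm{K}$.
\end{itemize}
All of these are nonnegative Borel measures with the required supports. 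Measurability of the integrands holds because the flow map is continuous, since $f$ is smooth.

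\textbf{Step 2 (constraints).} Constraints~\eqref{seq:probability_1} and~\eqref{seq:moment_cct} hold because $\mu_0$ is unchanged. For the Liouville constraint~\eqref{seq:liouv}, I test against an arbitrary $v\in\mathcal{C}^1(\bm{T}\times\bm{X})$. I integrate the chain rule~\eqref{eq:chain_rule} over $[0,\tau]$ to get $v(\tau,\psi(\tau\,|\,\mathbf{x}_0)) - v(0,\mathbf{x}_0) = \int_0^\tau \mathcal{L}_f v(t,\psi(t\,|\,\mathbf{x}_0))\,\mathrm{d}t$. Then I integrate against $\mu_0$, using Fubini and the definition of pushforward. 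This yields $\langle v,\mu_{\mathrm{T}}\rangle = \langle v(0,\cdot),\mu_0\rangle + \langle \mathcal{L}_f v,\mu\rangle$, which is the weak form of~\eqref{eq:liouville_short_free}. For the domination constraint~\eqref{eq:domination_constraint}, note that $\pi^{\mathbf{x}}_\#\mu_{\mathrm{T}} = \nu_\tau$, so $\nu_\tau - \nu_\tau|_{\bm{K}} = \nu_\tau|_{\bm{X}\setminus\bm{K}} \ge 0$.

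\textbf{Step 3 (objective and conclusion).} The objective is $\langle 1,\mu_{\mathrm{a}}\rangle = \nu_\tau(\bm{K}) = \int \mathbf{1}_{\bm{K}}(\psi(\tau\,|\,\mathbf{x}_0))\,\mathrm{d}\mu_0(\mathbf{x}_0)$, which is exactly the Problem~\ref{prob:main} objective at $(\mu_0,\tau)$. Hence $P^\star \ge$ that value, and taking the supremum over feasible $(\mu_0,\tau)$ gives $P^\star \ge \mathfrak{P}^\star$.

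The main obstacle I expect is the bookkeeping in the Liouville step. The difficulty is making the free-terminal-time equation consistent with a terminal time $\tau < T$: the occupation measure must be truncated at $\tau$ rather than integrated over all of $\bm{T}$, and $\mu_{\mathrm{T}}$ must carry the time marginal $\delta_\tau$. I would therefore verify carefully that the adjoint identity in~\eqref{eq:liouville_short_free} is meant in this weak sense for every $\mathcal{C}^1$ test function. That check is the Fubini computation in Step 2. The rest is routine.
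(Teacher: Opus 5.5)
Your proposal is correct and follows essentially the same route as the paper: each feasible $(\mu_0,\tau)$ is embedded via the occupation measure truncated at $\tau$, the terminal measure $\delta_\tau\otimes\nu_\tau$ (the pushforward of $\mu_0$ under the time-$\tau$ flow), and $\mu_{\mathrm{a}}$ as the restriction of $\pi^{\mathbf{x}}_\#\mu_{\mathrm{T}}$ to $\bm{K}$, with the Liouville constraint verified by integrating the chain rule over $[0,\tau]$. Your version is slightly more careful than the paper's, since it checks the domination constraint explicitly and keeps the objective value at a single feasible point distinct from the supremum $\mathfrak{P}^\star$.
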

\begin{proof}
	 Let $(\mu_0, \tau)$ be any feasible \replaced[id=R]{solution to}{pair of}~\eqref{eq:problem}. Hence,~\eqref{seq:probability_1} and~\eqref{seq:moment_cct} are satisfied. Define the occupation measure $\mu \in \mathcal{M}_+(\bm{T} \times \bm{X})$ by
	\begin{equation*}
		\langle v, \mu \rangle = \int_{\bm{X}_0} \int_0^\tau v(t, \psi(t\mid \mathbf{x}_0)) \, \mathrm{d}t \, \mathrm{d}\mu_0(\mathbf{x}_0)
	\end{equation*}
	for all $v \in \mathcal{C}(\bm{T} \times \bm{X})$,
	and the terminal measure $\mu_{\mathrm{T}} \in \mathcal{M}_+(\bm{T} \times \bm{X})$ by
	\begin{equation*}
		\langle v, \mu_{\mathrm{T}} \rangle = \int_{\bm{X}_0} v(\tau, \psi(\tau \mid \mathbf{x}_0)) \, \mathrm{d}\mu_0(\mathbf{x}_0)
	\end{equation*}
	for all $v \in \mathcal{C}(\bm{T} \times \bm{X})$.
	For any test function $v \in \mathcal{C}^1(\bm{T} \times \bm{X})$, integration by parts gives
	$
		\langle v, \mu_{\mathrm{T}} \rangle - \langle v(0, \cdot), \mu_0 \rangle = \langle \mathcal{L}_fv, \mu \rangle,
	$
	which is precisely the Liouville constraint $\mu_{\mathrm{T}} = \delta_0 \otimes \mu_0 + \mathcal{L}_f^\dagger \mu$.
	Now define $\mu_{\mathrm{a}} \in \mathcal{M}_+(\bm{K})$ as the restriction of $\pi_\#^{\mathbf{x}} \mu_{\mathrm{T}}$ to $\bm{K}$,
	\begin{equation*}
		\langle 1, \mu_{\mathrm{a}} \rangle = \mu_{\mathrm{T}}(\bm{T} \times \bm{K}) = \int_{\bm{X}_0} \mathbf{1}_{\bm{K}}(\psi(\tau \mid \mathbf{x}_0)) \, \mathrm{d}\mu_0(\mathbf{x}_0) = \int_{\bm{K}} \psi(\tau \mid \mathbf{x}_0) \, \mathrm{d}\mu_0(\mathbf{x}_0),
	\end{equation*}
	leading to the satisfaction of both~\eqref{eq:domination_constraint} and~\eqref{seq:mu_a_K} by construction.
	Thus, every feasible point of~\eqref{eq:problem} maps to a feasible point of the measure relaxation $(\mu, \mu_0, \mu_\mathrm{a}, \mu_\mathrm{T})$ with the same objective value $\mathfrak{P}^\star=\langle 1, \mu_\mathrm{a}\rangle$. Taking the supremum yields $P^\star \geq \mathfrak{P}^\star$.
\end{proof}

A more interesting result than Theorem~\ref{theorem:measure_program} is in which conditions the optimal solution of~\eqref{eq:measure-program} equals that of Problem~\ref{prob:main}. To ensure that that occurs we make the following assumption.

\begin{assumption}%
	\label{ass:compactness}
	The sets $\bm{X}$, $\bm{X}_0\subseteq \bm{X}$, $\bm{K}\subseteq \bm{X}$, and $\bm{T}$ are compact. All trajectories starting from $\bm{X}_0$ remain in $\bm{X}$ for all $t\in\bm{T}$.
\end{assumption}

\begin{remark}
	Assumption~\ref{ass:compactness} is very subtle, namely, related to the choice of $\bm{X}$, since it requests that the chosen $\bm{X}$ is invariant with respect to any trajectory obtained from $\bm{X}_0$ during the interval $\bm{T}$. Prior knowledge of this set is extremely relevant, since if invariance does not hold, the underlying programs that assume this assumption will become unfeasible, mostly because of the conservation of mass condition via the Liouville equation. One can just use an arbitrary sufficiently large $\bm{X}$ based on empirical results (e.g., some randomly samples trajectories), or make use of formal reachability techniques~\cite{althoff_set_2021}, i.e., estimation of an outer approximation of the reachable set of the system starting from $\bm{X}_0$ during $\bm{T}$.
\end{remark}

\clearpage
First, we see that under Assumption~\ref{ass:compactness} the program~\eqref{eq:measure-program} is solvable; the proof follows the rationale taken in~\cite[Proof of Theorem 2.3]{lasserre_nonlinear_2007}. 

\begin{theorem}%
	\label{theorem:inf_min}
	If Assumption~\ref{ass:compactness} holds and~\eqref{eq:measure-program} is feasible, then~\eqref{eq:measure-program} is solvable, i.e., $\sup = \max$.
\end{theorem}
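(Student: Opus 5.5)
The plan is the standard compactness argument: take a maximizing sequence, show that all measure variables have uniformly bounded mass, extract a weak-* convergent subsequence via Banach--Alaoglu, and verify that the limit is feasible and attains the supremum. Let $(\mu^k,\mu_0^k,\mu_{\mathrm{T}}^k,\mu_{\mathrm{a}}^k)$ be feasible for~\eqref{eq:measure-program} with $\langle 1,\mu_{\mathrm{a}}^k\rangle \to P^\star$. Note first that $P^\star \le 1$, so the supremum is finite. Indeed, by~\eqref{eq:domination_constraint} we have $\langle 1,\mu_{\mathrm{a}}\rangle \le \langle 1,\pi_\#^{\mathbf{x}}\mu_{\mathrm{T}}\rangle = \langle 1,\mu_{\mathrm{T}}\rangle$.

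\textbf{Mass bounds.} Take the test function $v\equiv 1$ in the Liouville constraint~\eqref{seq:liouv}. Since $\mathcal{L}_f 1 = 0$, this gives $\langle 1,\mu_{\mathrm{T}}^k\rangle = \langle 1,\mu_0^k\rangle = 1$. Next take $v(t,\mathbf{x}) = t$. Then $\mathcal{L}_f v = 1$, so
\begin{equation*}
\langle 1,\mu^k\rangle = \langle t,\mu_{\mathrm{T}}^k\rangle - 0 \le T .
\end{equation*}
Finally, $\langle 1,\mu_{\mathrm{a}}^k\rangle \le 1$ by domination. All four sequences therefore lie in bounded subsets of $\mathcal{M}(\cdot)$ over the compact sets $\bm{X}_0$, $\bm{K}$ and $\bm{T}\times\bm{X}$ (Assumption~\ref{ass:compactness}). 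These measure spaces are the duals of the separable Banach spaces $\mathcal{C}(\cdot)$. By Banach--Alaoglu, bounded sets are weak-* sequentially compact, so a common subsequence converges weak-* to some limit $(\mu^\ast,\mu_0^\ast,\mu_{\mathrm{T}}^\ast,\mu_{\mathrm{a}}^\ast)$.

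\textbf{Closedness of the constraints.} Each constraint must pass to the weak-* limit.
\begin{itemize}
\item Nonnegativity and the support conditions are preserved, because $\mathcal{M}_+$ of a compact set is weak-* closed.
\item The mass constraint~\eqref{seq:probability_1} and the moment constraints~\eqref{seq:moment_cct} are preserved, since $1$ and $\mathbf{x}^{\bm{\alpha}}$ are continuous on the compact set $\bm{X}_0$.
\item For the Liouville equation, fix $v\in\mathcal{C}^1(\bm{T}\times\bm{X})$. Then $v$, $v(0,\cdot)$ and $\mathcal{L}_f v$ are all continuous, using the smoothness of $f$. Hence every term of $\langle v,\mu_{\mathrm{T}}^k\rangle = \langle v(0,\cdot),\mu_0^k\rangle + \langle\mathcal{L}_f v,\mu^k\rangle$ converges, and the identity holds in the limit.
\item For the domination constraint~\eqref{eq:domination_constraint}, fix $g\in\mathcal{C}_+(\bm{X})$. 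Then $\langle g,\pi_\#^{\mathbf{x}}\mu_{\mathrm{T}}^k\rangle = \langle g\circ\pi,\mu_{\mathrm{T}}^k\rangle$ with $g\circ\pi$ continuous, and $\langle g,\mu_{\mathrm{a}}^k\rangle$ converges because $g|_{\bm{K}}$ is continuous. The inequality $\langle g,\pi_\#^{\mathbf{x}}\mu_{\mathrm{T}}^k - \mu_{\mathrm{a}}^k\rangle \ge 0$ therefore survives the limit. A signed measure that is nonnegative against every nonnegative continuous function is nonnegative, by the Riesz representation theorem.
\end{itemize}
The objective $\langle 1,\mu_{\mathrm{a}}^k\rangle$ also converges, so $\langle 1,\mu_{\mathrm{a}}^\ast\rangle = P^\star$. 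The limit is thus a feasible point attaining the supremum.

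\textbf{Main obstacle.} The delicate step is the uniform bound on the occupation measure $\mu^k$: nothing in the program bounds it directly. It must be extracted from the Liouville equation with the test function $v=t$, which relies on $\bm{T}$ being bounded. A secondary subtlety is to ensure that the free-terminal-time measure $\mu_{\mathrm{T}}$ and its projection behave well under weak-* limits. This follows because the projection $\pi$ is continuous, so the pushforward is weak-* continuous.
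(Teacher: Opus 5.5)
Your proposal is correct and follows essentially the same argument as the paper. Both obtain mass bounds from the Liouville equation with $v\equiv 1$ and a time-affine test function (the paper uses $v=T-t$, you use $v=t$; both give $\langle 1,\mu\rangle=\langle t,\mu_{\mathrm{T}}\rangle\le T$), bound $\mu_{\mathrm{a}}$ via domination, apply Banach--Alaoglu, check weak-$\star$ closedness of the constraints, and use continuity of the objective. Your appeal to separability of $\mathcal{C}(\cdot)$ for sequential compactness, and your direct check that the Liouville and domination constraints pass to the limit, make precise steps that the paper only asserts by citation.
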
%
\begin{proof} 
	For any $(\mu_{\mathrm{T}}, \mu,\mu_0)$ that fulfills~\eqref{seq:liouv}, i.e.,
	\begin{equation*}
		\forall v \in \mathcal{C}^1(\bm{T} \times \bm{X}):\quad \langle v, \mu_{\mathrm{T}} \rangle - \langle v(0, \cdot), \mu_0 \rangle = \langle \mathcal{L}_fv, \mu \rangle,
	\end{equation*}
	by considering the particular cases $v(t,\mathbf{x})=1$ and $v(t,\mathbf{x})=T-t$, it follows that
	\begin{equation*}
		\| \mu_0\|_{\mathrm{TV}} =1, \quad \|\mu_{\mathrm{T}} \|_{\mathrm{TV}} = 1, \quad \| \mu \|_{\mathrm{TV}} = \langle \mu_{\mathrm{T}}, t \rangle \leq T \|\mu_{\mathrm{T}}\|_{\mathrm{TV}} \leq T,
	\end{equation*}
	where $\|\nu\|_{\mathrm{TV}}$ denotes the total variation norm of the signed measure $\nu$, but in the particular case of nonnegative measures, this reduces to the mass of $\nu$, i.e., $\langle 1, \nu \rangle$.
	Now considering the domination constraint~\eqref{eq:domination_constraint}, taking the particular case $v(t,\mathbf{x})=1$, gives
	\begin{equation*}
		\|\mu_\mathrm{a}\|_{\mathrm{TV}} \leq \| \pi_\#^{\mathrm{x}} \mu_\mathrm{T}\|_{\mathrm{TV}} = \|\mu_\mathrm{T}\|_{\mathrm{TV}}= 1,
	\end{equation*}
	hence the feasible set of~\eqref{eq:measure-program} is bounded in total variation norm, and  
	$(\mu,\mu_\mathrm{T},\mu_\mathrm{a},\mu_0)$ belongs to some closed ball $\mathcal{B}_R$ in $\mathcal{M}^\sim=\mathcal{M}(\bm{T}\times \bm{X}) \times \mathcal{M}(\bm{T}\times \bm{X}) \times \mathcal{M}(\bm{K}) \times \mathcal{M}(\bm{X}_0)$. 
	Since $\bm{X}$, $\bm{X}_0$, $\bm{T}$, and $\bm{K}$ are compact by Assumption~\ref{ass:compactness}, and by virtue of the Banach-Alaoglu Theorem~\cite[Lemma 1.3.2 (b)]{hernandez-lerma_markov_2003}, $\mathcal{B}_R$ is compact for the weak-$\star$ topology, and additionally, due to the compactness assumption, the cone $\mathcal{M}^\sim_+$ is weak-$\star$ closed.
	Moreover, the flow~\eqref{eq:system_original} is smooth, thus $\mathcal{L}_f^\dagger$ is weak-$\star$ continuous~\cite[Remark 2.1 (i)]{lasserre_nonlinear_2007}. The projection-pushforward operator $\pi_\#^\mathbf{x}$ is also weak-$\star$ continuous. 
	The remaining equality constraints (mass and moments) are continuous, since they are defined by continuous linear functionals.
	Therefore, the feasible set is a closed subset of $\mathcal{B}_R \cap \mathcal{M}^\sim_+$, and thus is compact. 
	Since~\eqref{eq:measure-program} is feasible, by compactness, a maximizing sequence $(\mu^n, \mu_0^n, \mu_\mathrm{T}^n, \mu_\mathrm{a}^n)$ has a weak-$\star$ convergent subsequence to a feasible limit.  

	Finally, since $\bm{K}$ is compact due to Assumption~\ref{ass:compactness}, the linear functional $\mu_\mathrm{a} \mapsto \langle 1, \mu_\mathrm{a} \rangle$ to be maximized is weak-$\star$ continuous, and the limit attains the supremum. Hence, $\sup=\max$.
\end{proof}

Now we state the main result that establishes tightness between the original Problem~\ref{prob:main} and its relaxation via occupation measures.

\begin{theorem}%
	\label{theorem:no_gap}
	If Assumption~\ref{ass:compactness} holds and~\eqref{eq:measure-program} is feasible, then $P^\star=\mathfrak{P}^\star$.
\end{theorem}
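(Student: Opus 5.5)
The inequality $P^\star \geq \mathfrak{P}^\star$ is already Theorem~\ref{theorem:measure_program}, so the plan is to prove $P^\star \leq \mathfrak{P}^\star$. I would take an optimal quadruple $(\mu^\star,\mu_0^\star,\mu_{\mathrm{T}}^\star,\mu_{\mathrm{a}}^\star)$, which exists by Theorem~\ref{theorem:inf_min}. I would then show that its objective value $\langle 1,\mu_{\mathrm{a}}^\star\rangle$ is the value of an admissible choice in Problem~\ref{prob:main}.

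\textbf{Step 1 (eliminate $\mu_{\mathrm{a}}$).} By~\eqref{eq:domination_constraint} and~\eqref{seq:mu_a_K}, $\mu_{\mathrm{a}}^\star \leq \pi_\#^{\mathbf{x}}\mu_{\mathrm{T}}^\star$ restricted to $\bm{K}$. Hence $\langle 1,\mu_{\mathrm{a}}^\star\rangle \leq \mu_{\mathrm{T}}^\star(\bm{T}\times\bm{K})$, which is the elementary direction of~\eqref{eq:chance-upper}. It therefore suffices to bound $\mu_{\mathrm{T}}^\star(\bm{T}\times\bm{K})$.

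\textbf{Step 2 (superposition).} Under Assumption~\ref{ass:compactness} and smoothness of $f$, every triple satisfying the free-time Liouville equation~\eqref{seq:liouv} is generated by trajectories of~\eqref{eq:system_original}. This is the superposition principle used in~\cite{lasserre_nonlinear_2007,henrion_convex_2013-1,miller_peak_2021}. Since the ODE is deterministic and has unique solutions, its content is the following. There is a probability kernel $\kappa(\mathrm{d}\tau\mid\mathbf{x}_0)$ on $\bm{T}$ such that
\begin{equation*}
\langle v,\mu_{\mathrm{T}}^\star\rangle=\int_{\bm{X}_0}\int_{\bm{T}} v(\tau,\psi(\tau\mid\mathbf{x}_0))\,\kappa(\mathrm{d}\tau\mid\mathbf{x}_0)\,\mathrm{d}\mu_0^\star(\mathbf{x}_0),
\end{equation*}
and $\mu^\star$ is the corresponding averaged occupation measure up to the random stopping time. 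Concretely, I would disintegrate $\mu_{\mathrm{T}}^\star$ and $\mu^\star$ with respect to the flow coordinates $(t,\mathbf{x}_0)\mapsto(t,\psi(t\mid\mathbf{x}_0))$. This map is a diffeomorphism onto its image by uniqueness and smoothness, so the Liouville equation becomes the trivial transport equation $\partial_t$ in these coordinates. It can then be integrated explicitly: the $t$-density of $\mu^\star$ conditional on $\mathbf{x}_0$ equals $\kappa((t,T]\mid\mathbf{x}_0)$. Test functions $v=1$ and $v=T-t$, as in Theorem~\ref{theorem:inf_min}, fix the masses. Consequently,
\begin{equation*}
\mu_{\mathrm{T}}^\star(\bm{T}\times\bm{K})=\int_{\bm{X}_0}\int_{\bm{T}}\mathbf{1}_{\bm{K}}(\psi(\tau\mid\mathbf{x}_0))\,\kappa(\mathrm{d}\tau\mid\mathbf{x}_0)\,\mathrm{d}\mu_0^\star(\mathbf{x}_0).
\end{equation*}

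\textbf{Step 3 (derandomize).} For each $\mathbf{x}_0$, the inner integral is at most $\sup_{\tau\in\bm{T}}\mathbf{1}_{\bm{K}}(\psi(\tau\mid\mathbf{x}_0))$. The set $\{(\tau,\mathbf{x}_0):\psi(\tau\mid\mathbf{x}_0)\in\bm{K}\}$ is compact, so a measurable selection theorem gives a Borel map $\tau(\mathbf{x}_0)$ attaining this supremum. Replacing $\kappa$ by $\delta_{\tau(\mathbf{x}_0)}$ keeps $\mu_0^\star$ unchanged, so~\eqref{seq:probability_1} and~\eqref{seq:moment_cct} still hold, and it can only increase the objective. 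The resulting value is exactly the integrand of~\eqref{eq:prob} evaluated with $\mu_0^\star$ and the time $\tau=\tau(\mathbf{x}_0)$ placed inside the integral. This yields $P^\star\leq\mathfrak{P}^\star$.

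\textbf{Main obstacle.} The delicate point is the last identification in Step~3. It is valid precisely because in~\eqref{eq:prob} the time $\tau$ appears inside $\int_{\bm{K}}\psi(\tau\mid\mathbf{x}_0)\,\mathrm{d}\mu_0$. I would therefore read the supremum over $\tau\in\bm{T}$ in Problem~\ref{prob:main} as being taken pointwise per initial state, i.e.\ over Borel selections $\tau(\cdot)$; this is the reading under which the theorem holds as stated. If instead $\tau$ is required to be a single deterministic time common to all $\mathbf{x}_0$, Steps~1--2 still go through. Step~3 would then need an extra argument, and this is where I expect the real difficulty. The first thing I would try is an extreme-point argument: Rogosinski--Richter gives a $\mu_0^\star$ with at most $|\bm{A}|+1$ atoms, and I would then check whether a common time can be chosen across these atoms. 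I would not expect this to succeed in general, and I would say so explicitly.
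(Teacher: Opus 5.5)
\textbf{Verdict.} Your argument is correct under the reading you adopt, and your caveat about the literal reading is right. Your route is also genuinely different from the paper's, whose entire proof is a one-line appeal to ``similar arguments'' as in Lewis--Vinter's relaxation theorem.

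\textbf{Comparison with the paper.} You prove the needed superposition directly. Because the flow is deterministic and smooth, the map $(t,\mathbf{x}_0)\mapsto(t,\psi(t\mid\mathbf{x}_0))$ turns the Liouville constraint into pure $\partial_t$-transport on each fiber. That gives the terminal-time kernel $\kappa(\mathrm{d}\tau\mid\mathbf{x}_0)$, which you then derandomize. Three small remarks:
\begin{itemize}
\item Extend $f$ smoothly with compact support outside $\bm{X}$, so that the straightening map is a global diffeomorphism and the pulled-back test functions are admissible. The constraint only sees $f$ on $\bm{T}\times\bm{X}$, so nothing else changes.
\item The first hitting time $\mathbf{x}_0\mapsto\min\{\tau:\psi(\tau\mid\mathbf{x}_0)\in\bm{K}\}$ is lower semicontinuous, hence Borel; set it to $0$ where no hit occurs. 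So no abstract selection theorem is needed.
\item Theorem~\ref{theorem:inf_min} is superfluous, since Steps 1--3 bound the objective at every feasible point.
\end{itemize}
What your route buys over the citation is transparency. It shows that $P^\star$ is the supremum of $\mu_0(\{\mathbf{x}_0:\exists\tau\in\bm{T},\ \psi(\tau\mid\mathbf{x}_0)\in\bm{K}\})$ over moment-feasible $\mu_0$. In other words, the terminal time is chosen per initial state, a point the paper's proof never addresses.

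\textbf{Your caveat, made definitive.} With a single $\tau$ common to all $\mathbf{x}_0$, as Problem~\ref{prob:main} is literally written, the equality fails. Take $\dot x=1$, $\bm{T}=[0,1]$, $\bm{X}_0=[-1,1]$, $\bm{X}=[-1,2]$, $\bm{K}=\{1\}$, $b_1=0$, $b_2=\tfrac12$.
\begin{itemize}
\item Common-time value. For fixed $\tau$ put $c=1-\tau$ and $m=\mu_0(\{c\})$. The remainder $\mu_0-m\delta_c$ lives on $[-1,1]$ with mass $1-m$, so $mc\le 1-m$ and $\tfrac12-mc^2\le 1-m$. Hence $m\le\min\{1/(1+c),\,1/(2(1-c^2))\}\le\tfrac23$, attained by $\tfrac23\delta_{1/2}+\tfrac13\delta_{-1}$ at $\tau=\tfrac12$.
\item Measure-program value. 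The choice $\mu_0=\tfrac14\delta_{-1}+\tfrac12\delta_0+\tfrac14\delta_1$, $\mu=\tfrac12\int_0^1\delta_{(t,t)}\,\mathrm{d}t$, $\mu_{\mathrm{T}}=\tfrac14\delta_{(0,-1)}+\tfrac12\delta_{(1,1)}+\tfrac14\delta_{(0,1)}$, $\mu_{\mathrm{a}}=\tfrac34\delta_1$ is feasible for~\eqref{eq:measure-program}. So $P^\star\ge\tfrac34>\tfrac23$.
\end{itemize}
The per-state reading is therefore forced, not merely convenient. It also matches the abstract's ``at some point within a finite time horizon''.

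\textbf{One repair.} Your opening sentence no longer applies verbatim, because Theorem~\ref{theorem:measure_program} is proved only for a common $\tau$. Redo its construction with a Borel $\tau(\mathbf{x}_0)$ in place of $\tau$. The Liouville identity still holds by the fundamental theorem of calculus along each trajectory, integrated against $\mu_0$. This gives $P^\star\ge\mathfrak{P}^\star$ for the per-state value.
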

\begin{proof}
	Since $f(\cdot, \cdot)$ is smooth over $\bm{T}\times \bm{X}$ as assumed in the system dynamics~\eqref{eq:system_original}, the state space is compact by Assumption~\ref{ass:compactness} and the cost function is weak-$\star$ continuous, then by similar arguments as in~\cite[Theorem 2.1]{lewis_relaxation_1980}, $P^\star=\mathfrak{P}^\star$.
\end{proof}

A natural dual counterpart to~\eqref{eq:measure-program} also exists, taking the form of a functional optimization problem over continuous functions~\cite{lasserre_global_2001}. This dual perspective offers complementary insights, revealing connections to classical results from \textit{dissipativity theory}, such as the synthesis of barrier functions for safety verification. The functional program and its relationship to the primal measure program are explored in detail below.

\subsection{Functional program}%
\label{sec:functional_program}

We now derive the dual of the measure program \eqref{eq:measure-program}. This dual formulation offers a complementary perspective: instead of optimizing over measures, it seeks functions that certify the upper bound on the unsafe probability, i.e., the worst-case probability. As we shall see, this functional program is \replaced[id=R]{}{intimately} related to the classical barrier function synthesis for safety verification~\cite{WIELAND2007462}, where the existence of a suitable barrier function guarantees that the system remains within a safe set.

Following the standard Lagrangian duality procedure for infinite-dimensional linear programs over measures (see, e.g.,~\cite{lasserre_global_2001}), we obtain \replaced[id=R]{}{the following dual program.}
  \begin{subequations}
	\label{eq:functional_program}
    \begin{align}
      D^\star \, = \inf_{\substack{\omega \in \mathcal{C}^1(\bm{X}\times \bm{T}), \\ \gamma \in \mathbb{R}^{|\bm{A}|}, \, \nu \in \mathbb{R}}} \quad & - \nu-\sum_{\bm{\alpha} \in \bm{A}} \gamma_{\bm{\alpha}} b_{\bm{\alpha}}  \label{eq:cost_functional}\\
      \mathrm{s.t.} \,\,\, \quad \quad & \forall (\mathbf{x},t) \in \bm{K} \times \bm{T}:  \hspace{-1cm} &  \hspace{-2cm} \omega(\mathbf{x},t) & \geq 1 \label{eq:on_unsafe}\\
      & \forall (\mathbf{x},t) \in \bm{X} \times \bm{T}:  \hspace{-1cm} & \hspace{-2cm} \omega(\mathbf{x},t) &\geq 0 \label{eq:nonnegativity_of_w}\\
      & \forall (\mathbf{x},t) \in \bm{X}\times \bm{T}:  \hspace{-1cm} & \hspace{-2cm} -\mathcal{L}_f \omega(\mathbf{x},t) &\geq 0  \label{eq:dissipativity_cond}\\
	  & \forall \mathbf{x} \in \bm{X}_0:  \hspace{-1cm} & \hspace{-2.5cm} -\omega(\mathbf{x},0) - \nu-\sum_{\bm{\alpha}\in \bm{A}} \gamma_{\bm{\alpha}} \mathbf{x}^{\bm{\alpha}} &\geq 0 \label{eq:moment_info_restriction}
    \end{align}
  \end{subequations}
  with $ b_{\bm{\alpha}} = \langle \mu_0, \mathbf{x}^{\bm{\alpha}} \rangle$ for $\bm{\alpha} \in \bm{A}$.
  The extensive step-by-step dualization of~\eqref{eq:measure-program} is shown in Appendix~\ref{sec:appendix_A}. 

The following result shows that under some regularity assumptions, the measure program and its dual, i.e., the functional program~\eqref{eq:functional_program}, have the same optimal value. Here we follow a similar proof to~\cite[Proof of Theorem 2.3 (ii)]{lasserre_nonlinear_2007} and~\cite[Proof of Theorem 3.1.2]{korda_moment-sum--squares_2016}.
	
\begin{theorem}%
	\label{theorem:functional}
	Under Assumption~\ref{ass:compactness}, the optimal value of~\eqref{eq:measure-program} coincides with the optimal value of~\eqref{eq:functional_program}, i.e., $P^\star = D^\star$.
\end{theorem}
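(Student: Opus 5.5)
The argument proceeds in two steps: weak duality, then absence of a duality gap via the standard infinite-dimensional LP theorem of Anderson--Nash type (as in \cite[Theorem 2.3 (ii)]{lasserre_nonlinear_2007} and \cite[Theorem 3.1.2]{korda_moment-sum--squares_2016}).

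\textbf{Weak duality.} Take any feasible primal tuple $(\mu,\mu_0,\mu_{\mathrm{T}},\mu_{\mathrm{a}})$ of~\eqref{eq:measure-program} and any feasible dual triple $(\omega,\gamma,\nu)$ of~\eqref{eq:functional_program}. Then
\begin{align*}
\langle 1,\mu_{\mathrm{a}}\rangle \le \langle \omega,\mu_{\mathrm{a}}\rangle \le \langle \omega,\mu_{\mathrm{T}}\rangle = \langle \omega(\cdot,0),\mu_0\rangle + \langle \mathcal{L}_f\omega,\mu\rangle \le \langle \omega(\cdot,0),\mu_0\rangle \le -\nu-\sum_{\bm{\alpha}\in\bm{A}}\gamma_{\bm{\alpha}} b_{\bm{\alpha}}.
\end{align*}
The steps use, in order:
\begin{itemize}
\item \eqref{eq:on_unsafe} together with $\mu_{\mathrm{a}}\in\mathcal{M}_+(\bm{K})$;
\item \eqref{eq:nonnegativity_of_w} together with the domination constraint~\eqref{eq:domination_constraint}, read on $\bm{T}\times\bm{X}$ through $\pi^{\mathbf{x}}_\#$; since $\omega$ depends on $t$, one should use $\omega_{\max}(\mathbf{x})=\max_t\omega(\mathbf{x},t)$, or equivalently regard $\mu_{\mathrm{a}}$ as dominated on the lifted space;
\item the Liouville constraint~\eqref{seq:liouv};
\item \eqref{eq:dissipativity_cond};
\item \eqref{eq:moment_info_restriction} integrated against $\mu_0$, together with~\eqref{seq:probability_1} and~\eqref{seq:moment_cct}.
\end{itemize}
Hence $P^\star\le D^\star$.

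\textbf{Zero gap.} Write the primal in standard conic form: the decision variable lies in the cone $\mathcal{M}^\sim_+$ (with a slack measure $\hat\mu\in\mathcal{M}_+(\bm{X})$ for the domination constraint). The constraint map $\mathcal{A}$ sends it to $\bigl(\mu_{\mathrm{T}}-\delta_0\otimes\mu_0-\mathcal{L}_f^\dagger\mu,\ \pi^{\mathbf{x}}_\#\mu_{\mathrm{T}}-\mu_{\mathrm{a}}-\hat\mu,\ \langle 1,\mu_0\rangle,\ (\langle\mathbf{x}^{\bm{\alpha}},\mu_0\rangle)_{\bm{\alpha}\in\bm{A}}\bigr)$. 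With the weak-$\star$ topologies (measures paired with continuous functions, and $\mathcal{C}^1$ test functions for the Liouville part), $\mathcal{A}$ is the adjoint of the dual constraint map and is weak-$\star$ continuous; this continuity was already established in the proof of Theorem~\ref{theorem:inf_min}. By \cite[Theorem 3.10]{anderson} (Anderson--Nash), there is no duality gap provided the set
\[
\{(\mathcal{A}z,\langle c,z\rangle): z\in\mathcal{M}^\sim_+\}
\]
is weak-$\star$ closed.

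\textbf{Closedness (main obstacle).} Take a net $z_k$ with $\mathcal{A}z_k\to y$ and $\langle c,z_k\rangle\to s$. The plan is to obtain uniform mass bounds from the components of $y$, mimicking Theorem~\ref{theorem:inf_min}:
\begin{itemize}
\item the mass component bounds $\|\mu_0^k\|$;
\item testing the Liouville residual with $v=1$ bounds $\|\mu_{\mathrm{T}}^k\|$;
\item testing with $v=T-t$ bounds $\|\mu^k\|$;
\item the domination residual bounds $\|\mu_{\mathrm{a}}^k\|+\|\hat\mu^k\|$.
\end{itemize}
Note that the residuals only converge, they are not fixed, so each bound is constant plus a convergent term and is therefore eventually uniform. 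Banach--Alaoglu on the compact sets of Assumption~\ref{ass:compactness} then gives a convergent subnet. Its limit $z$ lies in the closed cone and satisfies $\mathcal{A}z=y$, $\langle c,z\rangle=s$ by continuity. This gives $P^\star=D^\star$.

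The delicate point is the $t$-dependence of $\omega$ versus the time-marginalized domination constraint in the weak-duality chain. If needed, I would restate the dual constraint~\eqref{eq:nonnegativity_of_w} as coming from the dual variable of~\eqref{eq:domination_constraint}, namely a $t$-independent function $w(\mathbf{x})\ge\omega(\mathbf{x},t)$, following the dualization in Appendix~\ref{sec:appendix_A}.
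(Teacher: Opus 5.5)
Your zero-gap argument is the paper's own. You add a slack measure for \eqref{eq:domination_constraint}, invoke \cite[Theorem 3.10]{anderson_linear_1987}, and prove weak-$\star$ closedness of the image set. Closedness comes from total-variation bounds (test functions $v=1$ and $v=T-t$, plus the mass of the domination residual), then Banach--Alaoglu and continuity of the constraint map. Treating the residuals as only convergent, and passing to subnets, is more careful than the paper. The paper substitutes the exact feasibility values ($\|\mu_0\|_{\mathrm{TV}}=1$, $\|\mu_{\mathrm{T}}\|_{\mathrm{TV}}=1$) into a sequence for which only $\mathfrak{A}\mathfrak{y}_n\to a$ is known.

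The one real error is in how you handle the $t$-dependence of $\omega$. You are right to flag it: Appendix~\ref{sec:appendix_A} silently pairs the same $\omega$ with $\mu_{\mathrm{a}}$ on $\bm{K}$ and with $\mu_{\mathrm{T}}$ on $\bm{T}\times\bm{X}$.

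\textbf{The $\omega_{\max}$ repair fails.} With $\omega_{\max}(\mathbf{x})=\max_t\omega(\mathbf{x},t)$ you do get $\langle 1,\mu_{\mathrm{a}}\rangle\le\langle\omega_{\max},\pi^{\mathbf{x}}_\#\mu_{\mathrm{T}}\rangle$. But $\langle\omega_{\max},\pi^{\mathbf{x}}_\#\mu_{\mathrm{T}}\rangle=\int\omega_{\max}(\mathbf{x})\,\mathrm{d}\mu_{\mathrm{T}}(t,\mathbf{x})\ge\langle\omega,\mu_{\mathrm{T}}\rangle$, so the chain cannot continue. Your closing restatement $w(\mathbf{x})\ge\omega(\mathbf{x},t)$ is backwards for the same reason.

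\textbf{The correct object is $\omega_{\min}(\mathbf{x})=\min_{t\in\bm{T}}\omega(\mathbf{x},t)$.} It is continuous because $\bm{T}$ is compact. It is $\ge 1$ on $\bm{K}$ because \eqref{eq:on_unsafe} holds for every $t$, and $\ge 0$ on $\bm{X}$ by \eqref{eq:nonnegativity_of_w}. It also satisfies $\int\omega_{\min}\,\mathrm{d}\mu_{\mathrm{T}}\le\langle\omega,\mu_{\mathrm{T}}\rangle$, which is what the chain needs.

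\textbf{This matters for zero gap too, not only weak duality.} Normalize the multiplier $w\in\mathcal{C}(\bm{X})$ of \eqref{eq:domination_constraint} so that $w\ge 0$ and $w\ge 1$ on $\bm{K}$; it must then satisfy $w(\mathbf{x})\le\omega(\mathbf{x},t)$ for all $t$. Identifying the formal dual of the unlifted program with \eqref{eq:functional_program} uses exactly this: take $w=\omega_{\min}$ in one direction and use $\omega\ge w$ in the other. Anderson--Nash only gives zero gap with respect to that formal dual, so you need this identification.

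\textbf{The lifted alternative works but is incomplete as stated.} Take $\mu_{\mathrm{a}}\in\mathcal{M}_+(\bm{T}\times\bm{K})$ with $\mu_{\mathrm{T}}-\mu_{\mathrm{a}}\in\mathcal{M}_+(\bm{T}\times\bm{X})$. After eliminating the domination multiplier, its formal dual reduces to \eqref{eq:functional_program}. It is not ``equivalent'' to the $\omega_{\max}$ route, though. You must also add the one-line argument that lifting does not change $P^\star$. For fixed $(\mu_0,\mu,\mu_{\mathrm{T}})$ the largest admissible mass is $\mu_{\mathrm{T}}(\bm{T}\times\bm{K})$ in both versions. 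To see this, project a lifted $\mu_{\mathrm{a}}$ with $\pi^{\mathbf{x}}_\#$ in one direction; in the other, take the restriction of $\mu_{\mathrm{T}}$ to $\bm{T}\times\bm{K}$.
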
%
\begin{proof} 
	First let us write~\eqref{eq:measure-program} in the standard format of a linear program.
	We introduce the new measure decision variable $\mu_\mathrm{z} \in \mathcal{M}_+(\bm{X})$ to convert the inequality constraint~\eqref{eq:domination_constraint} into an equality constraint, making the new decision variable vector
	$
		\mathfrak{y} = (\mu_0, \mu_\mathrm{a}, \mu, \mu_\mathrm{T}, \mu_\mathrm{z}),
	$
	with the conic constraints 
	\begin{align*}
		\mathfrak{K} = \mathcal{M}_+(\bm{X}_0) \times \mathcal{M}_+(\bm{K})  \times  \mathcal{M}_+(\bm{T} \times \bm{X}) \times  \mathcal{M}_+(\bm{T} \times \bm{X}) \times \mathcal{M}_+(\bm{X}).
	\end{align*}
	The new set of equalities
	\begin{align*}
		\begin{aligned}
			& \mu_\mathrm{T} =  \delta_0 \otimes \mu_0 + \mathcal{L}_f^\dagger \mu \\
			& \langle \mu_0, 1 \rangle = 1 \\
			& \langle \mu_0, \mathbf{x}^{\bm{\alpha}} \rangle = b_{\bm{\alpha}} \quad \mathrm{for} \quad \bm{\alpha} \in \bm{A} \\
			& \pi_\#^{\mathbf{x}} \mu_\mathrm{T} - \mu_\mathrm{a} - \mu_\mathrm{z} = 0 
		\end{aligned}
	\end{align*}
	can be represented simply by $\mathfrak{A}\mathfrak{y} = \mathfrak{b}$, since $\mathcal{L}_f^\dagger$, $\pi_\#^\mathbf{x}$, and the duality pairing are linear operators.
	Thus,~\eqref{eq:measure-program} is equivalently written as the standard-form linear program
	\begin{align}%
		\label{eq:lp_standard}
		\begin{aligned}
			\sup_{\mathfrak{y}} \quad &\langle \mathfrak{y}, \mathfrak{c} \rangle \\
			\mathrm{s.t.} \quad  & \mathfrak{A}\mathfrak{y} = \mathfrak{b} \\
			& \mathfrak{y}\in \mathfrak{K}
		\end{aligned}
	\end{align}
	with $\langle \mathfrak{y}, \mathfrak{c} \rangle = \langle 1, \mu_\mathrm{a} \rangle$.
	Let us consider the set 
	$
		\mathfrak{D}=\{(\mathfrak{A}\mathfrak{y}, \langle \mathfrak{y}, \mathfrak{c} \rangle ): \mathfrak{y} \in \mathfrak{K}\}
	$. 
	By \cite[Theorem 3.10]{anderson_linear_1987},
	if the linear program~\eqref{eq:lp_standard} has a feasible solution with finite value, and if $\mathfrak{D}$ is closed in the weak-$\star$ topology, then there is no duality gap. 
	We now prove $\mathfrak{D}$ is closed. Consider the sequence $(\mathfrak{y}_n)_{n\in \mathbb{N}} \in \mathfrak{K}$, such that
	\begin{align*}
		(\mathfrak{A}\mathfrak{y}_n, \langle \mathfrak{c}, \mathfrak{y}_n \rangle ) \to (a,b)
	\end{align*}
	for some $(a,b)\in \mathfrak{S} \times \mathbb{R}$, with $\mathfrak{S}=\mathcal{M}(\bm{T}\times \bm{X}) \times \mathbb{R}\times \mathbb{R}^{|\bm{A}|} \times \mathcal{M}(\bm{X})$. In particular, similar to the proof of Theorem~\ref{theorem:inf_min}, by considering the cases $v_0=T-t$ and $v_1=1$, the obtain
	\begin{align*}
		\begin{aligned}
			\| \mu_0\|_{\mathrm{TV}} =1, \quad \|\mu_\mathrm{T} \|_{\mathrm{TV}} = 1, \quad \| \mu \|_{\mathrm{TV}} = \langle \mu_\mathrm{T}, t \rangle \leq T \|\mu_\mathrm{T}\|_{\mathrm{TV}} \leq T,
		\end{aligned}
	\end{align*}
	and due to the rewriting of the domination constraint~\eqref{eq:domination_constraint} with the introduction of $\mu_z$, we have
	\begin{align}
		\label{eq:help_proof}
		\|\mu_\mathrm{z}\|_{\mathrm{TV}} + \|\mu_\mathrm{a}\|_{\mathrm{TV}} = 1 \implies \|\mu_\mathrm{z}\|_{\mathrm{TV}} \leq  \|\mu_\mathrm{a}\|_{\mathrm{TV}} \leq 1.
	\end{align}

	Therefore, all components of $\mathfrak{y}_n$ are uniformly bounded in total variation norm. \replaced[id=R]{By}{Due to} Assumption~\ref{ass:compactness}, $\bm{X}, \bm{X}_0, \bm{K}$, and $\bm{T}$ are compact, and by the Banach-Alaoglu theorem~\cite[Lemma 1.3.2 (b)]{hernandez-lerma_markov_2003} the closed balls in the corresponding measure spaces are weak-$\star$ compact. Hence, there exists a subsequence $(\mathfrak{y}_{n})$ converging weak-$\star$ to some $\mathfrak{y} \in \mathfrak{K}$.
	By weak-$\star$ continuity of $\mathfrak{A}$ and the linear functional $\langle \mathfrak{c}, \cdot \rangle$ (see, e.g., \cite[Remark 2.1 (i)]{lasserre_nonlinear_2007}), we have $\mathfrak{A}\mathfrak{y}=a$ and $\langle \mathfrak{c}, \mathfrak{y} \rangle = b$. Thus, $(a,b) \in \mathfrak{D}$, proving $\mathfrak{D}$ is closed. The fact that the optimal value of~\eqref{eq:lp_standard} is finite follows readily from~\eqref{eq:help_proof} and from the compactness of $\bm{K}$.

	The result now follows trivially from~\cite[Theorem 3.10]{anderson_linear_1987}.
\end{proof}

\begin{remark}
The functional program~\eqref{eq:functional_program} admits a natural interpretation as synthesis of a  barrier function $\omega:\bm{X}\times\bm{T} \to \mathbb{R}$ for safety verification. Indeed, the constraints on $\omega$ resemble those of a time-dependent barrier function:
\begin{itemize}
	\item constraint~\eqref{eq:on_unsafe} requires $\omega$ to be at least $1$ on the unsafe set $\bm{T}\times\bm{K}$, i.e.,
	\begin{align*}
		\bm{T} \times \bm{K} \subseteq \{ \mathbf{x} \in \bm{X} \, : \, \omega(\mathbf{x}) \geq 1\};
	\end{align*}
	\item constraint~\eqref{eq:nonnegativity_of_w} enforces nonnegativity of $\omega$ on $\bm{T}\times \bm{X}$;
	\item constraint~\eqref{eq:dissipativity_cond} imposes a dissipativity-like condition, i.e., 
	\begin{align*}
		\mathcal{L}_f \omega(\mathbf{x},t) \leq 0 \,\,  \text{on} \,\,  \bm{X}\times \bm{T},
	\end{align*}
	which ensures that $\omega$ decreases along the trajectories within $\bm{X}\times \bm{T}$;
	\item constraint~\eqref{eq:moment_info_restriction} encodes the moment information through $\nu+\sum_{\bm{\alpha}\in\bm{A}} \gamma_{\bm{\alpha}} \mathbf{x}^{\bm{\alpha}}$ in the set containment constraint 
	\begin{align*}
		\bm{X}_0 \subseteq \left\{ \mathbf{x} \in \bm{X} \,:\, \omega(\mathbf{x}, 0) \geq -\nu -\sum_{\bm{\alpha} \in \bm{A}} \gamma_{\bm{\alpha}} \mathbf{x}^{\bm{\alpha}} \right\}.
	\end{align*}
\end{itemize}
This structure is characteristic of barrier function methods, where the existence of a function satisfying such inequalities, specifically~\eqref{eq:on_unsafe}\textendash\eqref{eq:dissipativity_cond}, certifies that trajectories originating from a specified initial set remain confined to the safe region. In the present probabilistic setting, adding~\eqref{eq:moment_info_restriction} provides an upper bound on the probability of reaching the unsafe set. The dual variables $\bm{\gamma}=(\gamma_{\bm{\alpha}})_{\bm{\alpha}\in\bm{A}}$ and $\nu$ serve to incorporate the moment information into the barrier certificate, effectively shaping the initial condition constraint to reflect the available statistical knowledge. Thus, the functional program can be viewed as a generalized barrier function synthesis problem, with the objective of minimizing an upper bound on the unsafe probability.
\end{remark}

\section{Finite-dimensional approximations}%
\label{sec:finite_dimensional}

The infinite dimensional problems~\eqref{eq:measure-program} and its dual~\eqref{eq:functional_program} are not directly amenable for computation. However, a sequence of finite dimensional approximations in terms of SDPs can be obtained. 
To do so, we first introduce the necessary background on moment sequences and their associated linear functionals and matrices, to develop a finite relaxed version of~\eqref{eq:measure-program}. That is, we briefly introduce the Moment-SOS hierarchy~\cite{lasserre_global_2001}. Later, when dealing with the strengthened version of the functional program~\eqref{eq:functional_program}, we recall SOS polynomials~\cite{blekherman_semidefinite_2013} and the generalized $\mathcal{S}$-procedure~\cite{Tan2006}.

We make the following assumptions.
\begin{assumption}%
	\label{ass:poly_dynamics}
	The dynamics presented in~\eqref{eq:system_original} are a polynomial, $f \in \mathbb{R}[\mathbf{x},t]^n$.
\end{assumption}
\begin{remark}
	The framework presented in this paper tailored for polynomial dynamical systems readily extends to rational dynamics provided the denominator is algebraic; this extension is detailed in Appendix~\ref{sec:appendix_C}. 
\end{remark}

\begin{assumption}%
	\label{ass:all_sets}
	The sets $\bm{T}$,  $\bm{X}$, and $\bm{X}_0$ are compact basic semialgebraic sets described as 
	\begin{align*}
		\begin{aligned}
			\bm{T}   &= \{t \in \mathbb{R} \, : \, g_i^{\bm{T}}(t) \geq 0, i \in [m_{\bm{T}}] \}, \\
			\bm{X}   &= \{\mathbf{x} \in \mathbb{R}^n \, : \, g_i^{\bm{X}}(\mathbf{x}) \geq 0, i \in [m_{\bm{X}}] \}, \\
			\bm{X}_0 &= \{\mathbf{x} \in \mathbb{R}^n \, : \, g_i^{\bm{X}_0}(\mathbf{x}) \geq 0, i \in [m_{\bm{X}_0}]  \}, \\
		\end{aligned}
	\end{align*}
	where $g_i^{\bm{T}} \in \mathbb{R}[t]$, and $g_i^{\bm{X}}(\mathbf{x}), g_i^{\bm{X}_0}(\mathbf{x}) \in \mathbb{R}[\mathbf{x}]$.
\end{assumption}
\begin{assumption}
	\label{ass:unsafe_set}
	The unsafe set $\bm{K}\subset \bm{X}$ is the compact basic semialgebraic set
	\begin{align}
		\bm{K} = \{\mathbf{x}\in \bm{X} \, : \, g_i^{\bm{K}}(\mathbf{x})\geq 0, \, i\in [m_{\bm{K}}] \},
	\end{align}
	where $g_i^{\bm{K}} \in \mathbb{R}[\mathbf{x}]$.
\end{assumption}


\subsection{Moment program}%
\label{sec:moment_program}

The concepts presented in this subsection are extensively developed in~\cite{lasserre_global_2001,lasserre_moments_2010,lasserre_introduction_2015}. 
Given a sequence $\mathbf{y}=(y_{\bm{\alpha}})_{\bm{\alpha} \in \mathbb{N}^n}$, we define the \textit{Riesz functional} $L_{\mathbf{y}}:\mathbb{R}[\mathbf{x}]\to \mathbb{R}$ by
\begin{align*}
	L_{\mathbf{y}}\left(\sum_{\bm{\alpha} \in \mathbb{N}^n} p_{\bm{\alpha}} \mathbf{x}^{\bm{\alpha}} \right) := \sum_{\bm{\alpha} \in \mathbb{N}^n} p_{\bm{\alpha}} y_{\bm{\alpha}},
\end{align*}
where only finitely many coefficients $p_{\bm{\alpha}} \in \mathbb{R}$ are nonzero.

A central result is the Riesz-Haviland theorem~\cite[Theorem 2.34]{lasserre_introduction_2015}, which provides a measure-theoretic characterization of when a given sequence $\mathbf{y}=(y_{\bm{\alpha}})_{\bm{\alpha}\in\mathbb{N}^n}$ admits a representing nonnegative measure supported on a closed set $\bm{K} \subset \mathbb{R}^n$. It states that given $\mathbf{y}=(y_{\bm{\alpha}})_{\bm{\alpha}\in\mathbb{N}^n}$ and a closed set $\bm{K}\subset \mathbb{R}^n$, there exists a finite Borel nonnegative measure $\mu$ on $\bm{K}$ with moments $\int_{\bm{K}} \mathbf{x}^{\bm{\alpha}} \mathrm{d}\mu(\mathbf{x}) = y_{\bm{\alpha}}$ if and only if $L_{\mathbf{y}}(f) \geq 0$ for all polynomials $f\geq 0$ on $\bm{K}$.
Since checking nonnegativity for all polynomials is generally intractable~\cite{lasserre_introduction_2015}, a common relaxation is to consider SOS polynomials. 

For each $d\in \mathbb{N}$, the \textit{moment matrix} of order $d$ associated with $\mathbf{y}=(y_{\bm{\alpha}})_{\bm{\alpha}\in\mathbb{N}^n}$ is the real symmetric matrix $\mathbf{M}_d(\mathbf{y})$ whose rows and columns are indexed in $\mathbb{N}_d^n$ such that 
\begin{align*}
	(\mathbf{M}_d(\mathbf{y}))_{(\bm{\alpha},\bm{\beta})} = {L}_{\mathbf{y}}(\mathbf{x}^{\bm{\alpha}+\bm{\beta}})=y_{\bm{\alpha}+\bm{\beta}}, \,\bm{\alpha}, \bm{\beta} \in \mathbb{N}_d^n.	
\end{align*}

Equivalently, for any polynomial $q = \sum_{\bm{\alpha}} q_{\bm{\alpha}} \mathbf{x}^{\bm{\alpha}} \in\mathbb{R}_d[\mathbf{x}]$ with coefficient vector $\mathbf{q}=(q_{\bm{\alpha}})_{|\bm{\alpha}|\leq d}$, the Riesz functional $L_{\mathbf{y}}$ satisfies
$ 
	L_{\mathbf{y}}(q^2) = \langle \mathbf{q}, \mathbf{M}_d(\mathbf{y}) \mathbf{q} \rangle \geq 0,
$
so $\mathbf{M}_d(\mathbf{y}) \succeq 0$ is a necessary condition for the existence of a nonnegative measure $\mu$ with pseudomoments $\mathbf{y}$.


Given a polynomial $g\in\mathbb{R}[\mathbf{x}]$ and a sequence $\mathbf{y}=(y_{\bm{\alpha}})_{\bm{\alpha}\in\mathbb{N}^n}$, the \textit{localizing matrix} of order $d$ associated with $g$ and $\mathbf{y}$ is the real symmetric matrix $\mathbf{M}_d(\mathbf{y} | g)$ whose rows and columns are indexed by $\mathbb{N}_d^n$ and whose entries are given by
\begin{align*}
	(\mathbf{M}_d(\mathbf{y} | g))_{(\bm{\alpha},\bm{\beta})}= L_{\mathbf{y}}(g(\mathbf{x})\mathbf{x}^{\bm{\alpha}+\bm{\beta}}),\, \bm{\alpha}, \bm{\beta} \in \mathbb{N}_d^n.
\end{align*}
Equivalently, for any polynomial $q\in\mathbb{R}_d[\mathbf{x}]$ with coefficient vector $\mathbf{q}=(q_{\bm{\alpha}})_{|\bm{\alpha}|\leq d}$, we have
$
	L_{\mathbf{y}}(gq^2) = \langle \mathbf{q}, \mathbf{M}_d(\mathbf{y} | g) \mathbf{q} \rangle
$.

Now suppose $\mu$ is a finite Borel measure supported on the semialgebraic set
\begin{align*}
	\bm{K}=\{\mathbf{x} \in \mathbb{R}^n : g_j(\mathbf{x}) \geq 0, \, j\in[m] \},
\end{align*}
and suppose $\mathbf{y}$ are the moments of $\mu$. Then for each $j\in[m]$ and any $q\in\mathbb{R}_d[\mathbf{x}]$, we obtain
$
	\langle \mathbf{q}, \mathbf{M}_d(\mathbf{y} | g_j) \mathbf{q} \rangle
	= L_{\mathbf{y}}(g_j q^2)
	\geq 0,
$
because $g_j\ge 0$ on $\bm{K}$ and $q^2\ge 0$. Hence, $\mathbf{M}_d(\mathbf{y} | g_j) \succeq 0$ for all $j\in[m]$ is a necessary condition for a nonnegative measure $\mu$ supported in $\bm{K}$ with pseudomoments $\mathbf{y}$. 

Together with the moment matrix condition $\mathbf{M}_d(\mathbf{y}) \succeq 0$, these localizing matrix constraints constitute the semidefinite programming relaxations of order $d$ of Lasserre's hierarchy for polynomial optimization~\cite{lasserre_introduction_2015}. 

\clearpage
We now have all the necessary tools to derive a finite-dimensional relaxation of the measure program stated in~\eqref{eq:measure-program}.
To this end, let us introduce the truncated moment vectors associated with the measures $\mu, \mu_0, \mu_{\mathrm{T}}$, and $\mu_{\mathrm{a}}$, respectively:
\begin{align*}
	\begin{aligned}
		y_{(\bm{\alpha}, \beta)} &= \langle \mathbf{x}^{\bm{\alpha }}t^{\beta}, \mu \rangle, \\
		(y_0)_{(\bm{\alpha})} &= \langle \mathbf{x}^{\bm{\alpha}} , \mu_0 \rangle, \\
		(y_{\mathrm{T}})_{(\bm{\alpha}, \beta)} &= \langle \mathbf{x}^{\bm{\alpha}} t^{\beta}, \mu_{\mathrm{T}} \rangle, \\
		(y_{\mathrm{a}})_{(\bm{\alpha})} &= \langle \mathbf{x}^{\bm{\alpha}}, \mu_{\mathrm{a}} \rangle,
	\end{aligned}
\end{align*}
where the multi-indices $(\bm{\alpha}, \beta)$ are restricted to orders such that the corresponding moments are defined up to the relaxation order $d\in \mathbb{N}$, i.e.,
$
	|\bm{\alpha}| + |\beta| \leq d
$.
With these definitions in place, we can now formulate a hierarchy of semidefinite relaxations for the measure program~\eqref{eq:measure-program}. 

For each relaxation order $d\in\mathbb{N}$, we obtain the moment program
\begin{subequations}
	\label{eq:moment-program}
	\begin{align}
		P^\star_d=\sup_{\mathbf{y}, \mathbf{y}_0, \mathbf{y}_\mathrm{T}, \mathbf{y}_\mathrm{a}} \quad & L_{\mathbf{y}_\mathrm{a}}(1) \\
		\mathrm{s.t.} \qquad & \forall (\bm{\alpha},\beta) \in \mathbb{N}_{\leq 2d-d^\circ f }^{n+1}: \quad \operatorname{Liouv}_{\bm{\alpha}, \beta}(\mathbf{y}_0, \mathbf{y}, \mathbf{y}_\mathrm{T}) = 0  \label{seq:liouv-moment}\\
		& \forall \bm{\alpha} \in \bm{A}: \quad  {L}_{\mathbf{y}_0}(\mathbf{x}^{\bm{\alpha}}) = b_{\bm{\alpha}}, \label{seq:initial_moments}\\
		& {L}_{\mathbf{y}_0}(1) = 1, \\
		& \mathbf{M}_d(\mathbf{y}_\mathrm{T}-\mathbf{y}_\mathrm{a}) \succeq 0, \\
		& \mathbf{M}_d(\mathbf{y}_\mathrm{T}) \succeq 0, \, \mathbf{M}_d(\mathbf{y}_\mathrm{a}) \succeq 0, \, \mathbf{M}_d(\mathbf{y}_0) \succeq 0, \, \mathbf{M}_d(\mathbf{y}) \succeq 0, \\
		& \forall i\in[m_{\bm{T}}]: \quad \mathbf{M}_{d-d^\circ g_i^{\bm{T}}}(\mathbf{y}_\mathrm{T}| g_i^{\bm{T}}), \, \mathbf{M}_{d-d^\circ g_i^{\bm{T}}}(\mathbf{y}| g_i^{\bm{T}}) \succeq 0, \,  \\
		& \forall i\in[m_{\bm{X}}]: \quad \mathbf{M}_{d-d^\circ g_i^{\bm{X}}}(\mathbf{y}_\mathrm{T}| g_i^{\bm{X}}), \, \mathbf{M}_{d-d^\circ g_i^{\bm{X}}}(\mathbf{y}| g_i^{\bm{X}}) \succeq 0,\\
		& \forall i\in [m_{\bm{K}}]: \quad \mathbf{M}_{d-d^\circ g_i^{\bm{K}}}(\mathbf{y}_\mathrm{a}| g_i^{\bm{K}}) \succeq 0,  \\
		& \forall i\in [m_{\bm{X}_0}]: \quad \mathbf{M}_{d-d^\circ g_i^{\bm{X}_0}}(\mathbf{y}_0| g_i^{\bm{X}_0}) \succeq 0, \label{seq:constraint_mu0_moments} 
	\end{align}
\end{subequations}
where $d^\circ g = \lceil \operatorname{deg} g/2 \rceil$.

In~\eqref{seq:liouv-moment}, $\operatorname{Liouv}_{\bm{\alpha},\beta}(\mathbf{y}_0, \mathbf{y}, \mathbf{y}_\mathrm{T})=0$ encodes the moment constraints derived from the Liouville equation. Concretely, this condition reads
\begin{align}
	\langle \mathbf{x}^{\bm{\alpha}} t^{\beta}, \mu_\mathrm{T} \rangle -\langle \mathbf{x}^{\bm{\alpha}} t^{\beta}, \delta_0 \otimes \mu_0\rangle - \langle \mathcal{L}_f(\mathbf{x}^{\bm{\alpha}} t^{\beta}), \mu\rangle = 0,
\end{align} 
which, when expressed in terms of the corresponding moment sequences $\mathbf{y}_0,\mathbf{y},\mathbf{y}_\mathrm{T}$ yields a linear equality constraint in the moments.

\begin{remark}	
	The program~\eqref{eq:moment-program} due to the relaxation applied in its conceivement, provides upper bounds on~\eqref{eq:measure-program}. Also, it establishes a nonincreasing hierarchy of SDPs, because for higher relaxation orders, the feasible set shrinks. Under the same assumptions as in Theorem~\ref{theorem:no_gap}, namely, feasibility of~\eqref{eq:measure-program} and satisfaction of Assumption~\ref{ass:compactness}, this relaxed hierarchy converges monotonically to the optimal value of~\eqref{eq:measure-program}. See~\cite[Theorem 1]{lasserre_moments_2010} for more details.
\end{remark}

\subsection{Sum-of-squares program}%
\label{sec:sos_program}

To render the infinite-dimensional functional program~\eqref{eq:functional_program} computationally tractable, as mentioned at the beginning of Sect.~\ref{sec:finite_dimensional}, we restrict our attention to polynomials. Namely, we replace the nonnegativity constraints over an arbitrary set, to check nonnegativity of polynomials over basic semialgebraic sets. By replacing these conditions to sufficient conditions based on sum-of-squares decompositions, we obtain a strengthened version of~\eqref{eq:functional_program} with finite-dimensional SDP. 

\begin{remark}
	In practice, primal-dual interior-point methods for SDP solve the moment program~\eqref{eq:moment-program} and the SOS program~\eqref{eq:sos_program} simultaneously as they form a primal-dual pair. While either formulation suffices as input to an SDP solver, engineering applications tend to favor the functional (SOS) formulation due to its direct interpretability in terms of barrier functions and Lyapunov-like certificates. This preference motivates the presentation of the SOS program in this section. 
\end{remark}

\vspace{0.2cm}

We recall the standard SOS characterization: a polynomial $p\in\mathbb{R}[\mathbf{x}]$ belongs to $\Sigma[\mathbf{x}]$ if and only if there exist a positive semidefinite matrix $\mathbf{Q}\succeq 0$ and a vector of polynomials $\xi\in\mathbb{R}[\mathbf{x}]^m$ satisfying 
\begin{align*}
	p = \xi^\top \mathbf{Q} \xi.
\end{align*}

Thus, verifying that a polynomial is SOS is equivalent to solving a set of linear equality constraints coupled with a positive semidefinite constraint $\mathbf{Q}\succeq 0$; this is precisely a semidefinite program.

However, the constraints in~\eqref{eq:functional_program} require nonnegativity of polynomials over basic semialgebraic sets, rather than global nonnegativity. To handle, this we employ the well-known \textit{Positivstellensatz} certificate. Among the various available certificates, we adopt Putinar's Positivstellensatz due to its favourable scalability in practical implementations.  
Although Putinar's theorem establishes an equivalence under the Archimedean condition~\cite{lasserre_moments_2010}, for our purposes it suffices to use it as a sufficient condition: if the required SOS decomposition exists, then the desired nonnegativity over a semialgebraic set is guaranteed. This sufficient condition is the generalized $\mathcal{S}$-procedure~\cite{Tan2006}, which we state below.

\begin{theorem}(Generalized $\mathcal{S}$-procedure)%
	\label{theorem:generalized_S_procedure}
	Let $\bm{K}$ be a basic semialgebraic set defined by the polynomials $g_i \in \mathbb{R}[\mathbf{x}]$ as
	\begin{align*}
		\bm{K}=\{ \mathbf{x} \in \mathbb{R}^n  :  \forall  i \in [m], \, g_i(\mathbf{x}) \geq 0\}.
	\end{align*}
	If there exist $\sigma_i\in\Sigma[\mathbf{x}]$ for $\, i= 1,\ldots, m$, satisfying
	\begin{align*}
		h(\mathbf{x})-\sum_{i\in[m]} \sigma_i(\mathbf{x}) g_i(\mathbf{x}) \in \Sigma[\mathbf{x}],
	\end{align*}
	then the polynomial $h$ is nonnegative on $\bm{K}$.
\end{theorem}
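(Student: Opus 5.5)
The argument is a direct pointwise evaluation; no compactness or Archimedean hypothesis is needed, since only sufficiency is claimed. Assume the certificate exists, with $\sigma_1,\ldots,\sigma_m\in\Sigma[\mathbf{x}]$ and
\begin{align*}
	s(\mathbf{x}) := h(\mathbf{x})-\sum_{i\in[m]} \sigma_i(\mathbf{x}) g_i(\mathbf{x}) \in \Sigma[\mathbf{x}].
\end{align*}
This lets us rewrite $h$ as the polynomial identity
\begin{align*}
	h(\mathbf{x}) = s(\mathbf{x}) + \sum_{i\in[m]} \sigma_i(\mathbf{x}) g_i(\mathbf{x}), \quad \mathbf{x}\in\mathbb{R}^n,
\end{align*}
which holds coefficientwise and hence at every point of $\mathbb{R}^n$.

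The key steps, in order, are as follows.

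\emph{Nonnegativity of SOS polynomials.} Any $p\in\Sigma[\mathbf{x}]$ satisfies $p(\mathbf{x})=\sum_j f_j(\mathbf{x})^2\geq 0$ for all $\mathbf{x}\in\mathbb{R}^n$. Equivalently, using the Gram representation recalled earlier, $p=\xi^\top\mathbf{Q}\xi$ with $\mathbf{Q}\succeq 0$, so $p(\mathbf{x})=\xi(\mathbf{x})^\top\mathbf{Q}\,\xi(\mathbf{x})\geq 0$. This gives $s\geq 0$ and $\sigma_i\geq 0$ on all of $\mathbb{R}^n$.

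\emph{Evaluation on the set.} Fix $\mathbf{x}\in\bm{K}$. By definition of $\bm{K}$, $g_i(\mathbf{x})\geq 0$ for each $i\in[m]$. Each product $\sigma_i(\mathbf{x})g_i(\mathbf{x})$ is therefore a product of two nonnegative reals and is nonnegative.

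\emph{Conclusion.} Summing the nonnegative terms in the identity gives $h(\mathbf{x})\geq 0$. Since $\mathbf{x}\in\bm{K}$ was arbitrary, $h$ is nonnegative on $\bm{K}$.

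There is no genuine obstacle. The only point requiring care is that the certificate is an identity in $\mathbb{R}[\mathbf{x}]$ rather than an inequality on $\bm{K}$, and this is exactly what makes pointwise evaluation legitimate.
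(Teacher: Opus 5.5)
Your proof is correct: writing $h = s + \sum_{i\in[m]}\sigma_i g_i$ as a polynomial identity and evaluating it at any $\mathbf{x}\in\bm{K}$ gives $h(\mathbf{x})\geq 0$, because $s$ and each $\sigma_i$ are nonnegative everywhere and $g_i(\mathbf{x})\geq 0$ on $\bm{K}$. The paper does not prove this statement; it cites the literature and uses it only as the easy, sufficient direction of Putinar-type certificates, and your pointwise-evaluation argument is the standard justification for that direction.
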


To ease the use of Theorem~\ref{theorem:generalized_S_procedure}, we introduce the \textit{quadratic module} $QM(\bm{K}) \subset \mathbb{R}[\mathbf{x}]$ associated with $g_1, \ldots, g_m \in \mathbb{R}[\mathbf{x}]$ as
\begin{align*}
	QM(\bm{K}) = \left\{ \sigma_0(\mathbf{x}) +\sum_{j=1}^m \sigma_j(\mathbf{x}) g_j(\mathbf{x}) : \sigma_j \in \Sigma[\mathbf{x}] \right\}.
\end{align*}
To establish the hierarchy of finite dimensional programs, we introduce the notion of finite dimensional quadratic module. The degree $d$ quadratic module denotes
\begin{align*}
	QM_{d}(\bm{K}) = \left\{\sigma_0(\mathbf{x}) +\sum_{j=1}^m \sigma_j(\mathbf{x}) g_j(\mathbf{x}) : \sigma_0 \in \Sigma_d[\mathbf{x}], \, \sigma_j \in \Sigma_{d-\operatorname{deg}g_j}[\mathbf{x}]   \right\}.
\end{align*}


By a direct application of the generalized $\mathcal{S}$-procedure on the functional program~\eqref{eq:functional_program}, we get
\begin{subequations}%
	\label{eq:sos_program}
	\begin{align}
		D_d^\star= \min_{\omega, \nu, \bm{\gamma}} \quad & - \nu-\sum_{\bm{\alpha} \in \bm{A}} \gamma_{\bm{\alpha}} b_{\bm{\alpha}}  \\
		\text{s.t.} \quad & 
		 \omega-1 \in QM_d(\bm{T}\times \bm{K}), \label{eq:sos_cons_1}\\
		&  \omega \in QM_d(\bm{T}\times \bm{X}), \label{eq:sos_cons_2}\\
		&  - \mathcal{L}_f \omega \in QM_d(\bm{T} \times \bm{X}), \label{eq:sos_cons_3}\\
		&  - \omega(\mathbf{x}, 0) - \nu - \sum_{\bm{\alpha}\in \bm{A}} \gamma_{\bm{\alpha}} \mathbf{x}^{\bm{\alpha}} \in QM_d(\bm{X}_0). \label{eq:sos_cons_4} 
	\end{align}
\end{subequations}

The use of SOS certificates in program~\eqref{eq:sos_program} introduces a strengthening of the original functional program~\eqref{eq:functional_program}, in the sense that the SOS conditions are sufficient but not necessary for the local nonnegativity constraints. Consequently, for a fixed relaxation order $d$ the SOS program~\eqref{eq:sos_program} constitutes a restriction of~\eqref{eq:functional_program}, and therefore its optimal value provides a provable upper bound on the worst-case unsafe probability of Problem~\ref{prob:main}. 
\begin{remark}
	As the relaxation order increases, the gap between the SOS certificate and the true nonnegativity condition diminishes; under the Archimedean condition, satisfied by the compactness Assumptions~\ref{ass:all_sets}\textendash\ref{ass:unsafe_set}, convergence to the optimal cost of the functional program~\eqref{eq:functional_program} is guaranteed in the limit $d\to \infty$, i.e., $D_d^\star \downarrow D^\star$ as $d\to \infty$~\cite[Chapter 4]{lasserre_moments_2010}.
\end{remark}

\subsection{Ambiguity sets}%
\label{sec:ambiguity}

Instead of fixing the prescribed moments to exact values, we consider the more general setting where the moment vector belongs to a prescribed uncertainty set. Specifically, we assume that the noncentral moments of interest are known only to lie within a bounded region, which we denote as ambiguity set, for instance, a box constraint on the first- and second-order noncentral moments. This relaxation introduces an additional layer of uncertainty into Problem~\ref{prob:main}, which leads to a more conservative (i.e., larger) worst-case probability, yet provides a more robust quantification of safety under moment uncertainty. 

When the ambiguity set is defined by box constraints, these can be incorporated directly into the measure program~\eqref{eq:measure-program} and, consequently, into the moment program~\eqref{eq:moment-program} without destroying convexity. To this end, we treat the moment vector 
\begin{align*}
	\mathbf{b}=(b_{\bm{\alpha}})_{\bm{\alpha} \in \bm{A}}
\end{align*}
as a decision variable and impose the component-wise bounds
\begin{align*}
	\underline{\mathbf{b}} \leq \mathbf{b} \leq \overline{\mathbf{b}},
\end{align*}
where $\underline{\mathbf{b}}=(\underline{b}_{\bm{\alpha}})_{\bm{\alpha}\in\bm{A}}$ and $\overline{\mathbf{b}}=(\overline{b}_{\bm{\alpha}})_{\bm{\alpha}\in\bm{A}}$ denote the lower and upper bounds on the noncentral moments indexed by $\bm{\alpha} \in \bm{A}$, respectively.

However, incorporating these box constraints directly into the functional program~\eqref{eq:functional_program} and its associated SOS reformulation~\eqref{eq:sos_program} is not immediate, as the objective functional in~\eqref{eq:cost_functional} becomes bilinear in the moment variables and the dual multipliers. In the special case of box constraints, this difficulty can be circumvented via an algebraic reformulation that preserves convexity. Specifically, we replace the original cost functional with the following expression:
\begin{align*}
	- \nu-\sum_{\bm{\alpha} \in \bm{A}} \frac{\overline{b}_{\bm{\alpha}} + \underline{b}_{\bm{\alpha}}}{2} \gamma_{\bm{\alpha}} + \sum_{\bm{\alpha}\in \bm{A}} \frac{\overline{b}_{\bm{\alpha}}-\underline{b}_{\bm{\alpha}}}{2} |\gamma_{\bm{\alpha}}| ,
\end{align*}
where $|\gamma_{\bm{\alpha}}|$ denotes the absolute value of $\gamma_{\bm{\alpha}}$. 

To handle the absolute value terms, we introduce the auxiliary variable $\mathbf{t}=(t_{\bm{\alpha}})_{\bm{\alpha}\in\bm{A}}$ and the linear inequality constraints: $t_{\bm{\alpha}} \geq \gamma_{\bm{\alpha}}$ and $t_{\bm{\alpha}} \geq -\gamma_{\bm{\alpha}}$. Thus, in the SOS programming framework, the optimization with moment uncertainty takes the following convex form:
\begin{subequations}
	\begin{align}
		\min_{\omega, \nu, \bm{\gamma}, \mathbf{t}} \quad & -\nu - \frac{1}{2}\sum_{\bm{\alpha} \in \bm{A}} (\overline{b}_{\bm{\alpha}} + \underline{b}_{\bm{\alpha}}) \gamma_{\bm{\alpha}} + (\overline{b}_{\bm{\alpha}}-\underline{b}_{\bm{\alpha}}) t_{\bm{\alpha}}  \\
		\text{s.t.} \quad & \eqref{eq:sos_cons_1}\textendash\eqref{eq:sos_cons_4}, \\
		& \forall \bm{\alpha} \in \bm{A}, \quad t_{\bm{\alpha}} \geq \gamma_{\bm{\alpha}} \, \, \,  \wedge \, \, \, t_{\bm{\alpha}} \geq -\gamma_{\bm{\alpha}}. \label{eq:additional_cons}
	\end{align}
\end{subequations}

Note that considering this additional uncertainty adds a potential increase in computation time, since we required to add additional decision variables, namely $\mathbf{t}$, and linear inequality constraints~\eqref{eq:additional_cons}.
The impact of this ambiguity on the resulting worst-case bounds is illustrated in the next section. 

\clearpage
\section{Numerical examples}%
\label{sec:application}

In this section, we illustrate the proposed methods through numerical examples. All simulations are conducted in \textsc{MATLAB} implemented using the open-source toolbox \textsf{Ca$\mathsf{\Sigma}$oS}~\cite{cunis_casigmaos_2024} that handles the parsing to the low-level solver. The semidefinite programming solver used was \textsc{MOSEK}~\cite{noauthor_mosek_2019} and the results were obtained on a 12th Gen Intel Core i5-1235U CPU with 16.0 GB of RAM.

\subsection{Academic example}
	\label{ex:1}

	Consider the one-dimensional polynomial dynamical system 
	\begin{align}
		\label{eq:academic_example}
		\dot{x}(t)=0.5x(1-x^2),
	\end{align}
	evolving on the time-window $\bm{T}=[0,1]$. This is the same system used in Fig.~\ref{fig:visualize-liouv} in Sect.~\ref{sec:liouville_eq}.
	The initial set is $\bm{X}_0 = [-0.5, 0.5]$ and we consider $\bm{X}=[-2, 2]$. The region of interest is $\bm{K}=[0.5,1]$.
	
	
	\subsubsection*{Case 1. Known first- and second-order moments} 

	We first assume that the first two noncentral moments of the initial distribution are known exactly with
	\begin{align*}
		b_{1} = 0 \quad \text{and} \quad b_2 = 0.05.
	\end{align*}
	 Fig.~\ref{fig:ex1_stats} presents the upper bounds on the worst-case probability obtained by applying the SOS-based program~\eqref{eq:sos_program} to the dynamical system~\eqref{eq:academic_example}, with the specified state space $\bm{X}$, initial set $\bm{X}_0$, time horizon $\bm{T}$, and moment information $b_1, b_2$.

	\begin{figure}[H]
		\centering
		\includegraphics[width=0.69\textwidth]{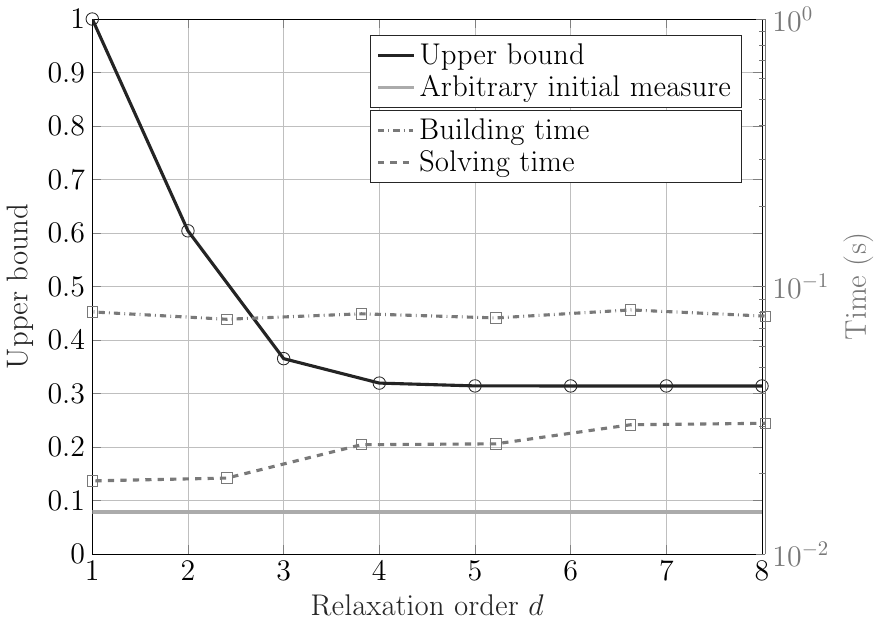}
		\caption{Upper bounds on the worst-case probability for the academic example in Sect.~\ref{ex:1} with known first- and second-order moments ($b_1$ and $b_2$, respectively).}%
		\label{fig:ex1_stats}
	\end{figure}

	As shown in Fig.~\ref{fig:ex1_stats}, the sequence of upper bounds $P_d^\star$ stabilizes around $0.31$, with a relaxation order $d=4$. Since this problem involves a one-dimensional system, the computation required to build and solve the underlying SDP is negligible and does not increase significantly over the tested relaxation orders.  

	As a sanity check, we have performed a Monte Carlo simulation with $10,000$ samples over the time interval $\bm{T}$, discretized with a step-size $dt=0.001$, an arbitrary initial distribution that follows the moment constraints. Specifically, we chose the polynomial density function 
	\begin{align}
		\label{eq:polynomia_density_function}
		p(x) = \begin{cases}
			-6x^2+1.5 & \text{for} \, x \in \bm{X}_0, \\
			0 & \text{otherwise}.
		\end{cases}
	\end{align}

	The estimated maximum probability attained for the system to be in $\bm{K}$ for some instance $t\in\bm{T}$ using the admissible density function~\eqref{eq:polynomia_density_function} was 0.0794. This value is significantly lower than the value attained by the finite-dimensional convex problem used. This stems from the fact that the problem solved looks for the worst-case $\mu_0$, while the one used in the Monte Carlo simulation is just an admissible $\mu_0$. In general, a randomly picked $\mu_0$ consistent with the moment constraints is unlikely to attain the worst-case probability, but it does provide a coarse lower bound on this value.
	More details on the Monte Carlo simulation, please see Appendix~\ref{sec:appendix_B}.

	\begin{remark}
		To the best of the authors' knowledge, no existing method searches for the worst-case initial measure $\mu_0$ that satisfies moment constraints while maximizing the probability that the system enters the set $\bm{K}$ at some time $t\in\bm{T}$. For this reason, we rely on an arbitrary initial measure that is guaranteed to satisfy the moment constraints, but is not guaranteed to be the worst-case one.
	\end{remark}

	\subsubsection*{Case 2. Ambiguity sets}

	To further investigate the effect of uncertainty in the moments, we now introduce ambiguity sets for the first and second moments 
	\begin{align*}
		\begin{aligned}
			b_1 &\in \left[\overline{b}_1-\kappa\Delta_{b_1}, \, \overline{b}_1+\kappa\Delta_{b_1}\right] \\
			b_2 &\in \left[\overline{b}_2-\kappa\Delta_{b_2}, \, \overline{b}_2+\kappa\Delta_{b_2}\right]
		\end{aligned}
	\end{align*}
	with 
	\begin{equation*}
		\overline{b}_1=0, \quad \overline{b}_2=0.05,
	\end{equation*}
	and
	\begin{equation*}
		\Delta_{b_1}=0.01, \quad \Delta_{b_2}=0.001,
	\end{equation*}
	for $\kappa\in \{0, 2, 5, 8, 12\}$. The results of applying the finite-dimensional program~\eqref{eq:sos_program} are shown in Fig.~\ref{fig:ex1_stats_amb}.

	\begin{figure}
		\centering
		\includegraphics[width=0.75\textwidth]{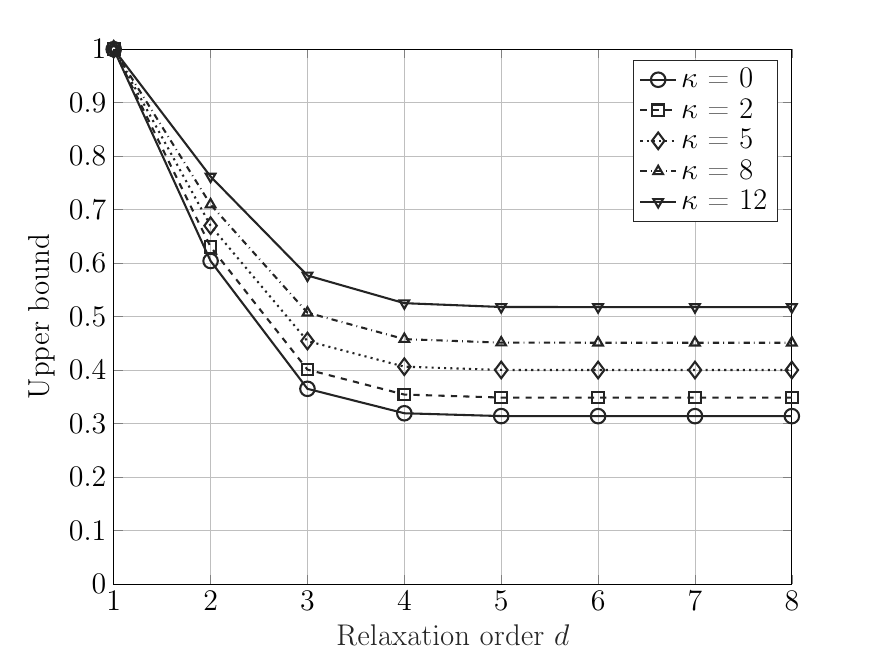}
		\caption{Upper bounds on the worst-case probability for different levels of ambiguity in the moment information, namely the first- and second-order moments, for the academic example in Sect.~\ref{ex:1}.}%
		\label{fig:ex1_stats_amb}
	\end{figure}

	Fig.~\ref{fig:ex1_stats_amb} shows the expected behavior of considering the ambiguity sets: for a larger ambiguity set, the worst case probability is equal or larger than considering a smaller ambiguity set; additionally, this result shows that we can successfully analyze the effect of small uncertainties in the supposed known noncentral moments.  
	
	Moreover, in this simple case, the changes to the finite-dimensional problem to handle the ambiguity sets did not incur a significant addition to the computational time, although it is expected this to become an issue for larger and more complex systems. Compared to the nominal case, we require adding new decision variables and linear inequality constraints.

\subsection{Example: Object in orbit}%
	\label{ex:3}
	
	To illustrate the proposed framework in a setting of practical relevance, we examine the motion of an uncontrolled object in Earth orbit, as it traverses the vicinity of a functioning satellite. Concretely, we seek to evaluate the probability that, under the current orbital state, the object enters a prescribed ball centered at the satellite; a region we henceforth term the unsafe set, denoted by $\bm{K}$ in accordance with the preceding development.
	
	In the Earth-Centered Inertial reference frame,  the position vector $\mathbf{r}=(r_x,r_y,r_z)\in\mathbb{R}^3$ and velocity vector $\mathbf{v} = (v_x, v_y, v_z)\in\mathbb{R}^3$ of an in-orbit object satisfy the classical two-body dynamics~\cite{casella_high-accuracy_2007}
\begin{align}
	\label{eq:2nd-dynamics}
	\frac{\mathrm{d}^2\mathbf{r}}{\mathrm{d}t^2} = - \varpi  \mathbf{r} \| \mathbf{r}\|^{-3}_2 
\end{align}
where $\varpi=3.986\cdot 10^{14}~\mathrm{m^3\, s^{-2}}$ is the standard gravitational parameter of the Earth, and $\|\cdot\|_2$ denotes the Euclidean norm. 
With the state representation $\mathbf{x}=(\mathbf{r}, \mathbf{v})$, we write~\eqref{eq:2nd-dynamics} in the form 
\begin{align}
	\label{eq:ode}
	\dot{\mathbf{x}}(t) = f(\mathbf{x}) = \begin{bmatrix}
		\mathbf{v} \\
		- \varpi  \mathbf{r} \| \mathbf{r}\|^{-3}_2
	\end{bmatrix}.
\end{align}

Since the programs developed in Sect.~\ref{sec:finite_dimensional} presupposes polynomial dynamics, we approximate~\eqref{eq:ode} via Taylor expansion about the mean initial state. Over extended time horizons, the Taylor expansion naturally incurs significant error. A more accurate approximation could be obtained via a least-squares fit over samples drawn from the reachable region over the time horizon of interest. For the sake of simplicity and ease of exposition, we restrict ourselves to the coarser Taylor approximation. 
For a specified state $\mathbf{x}_0\in\mathbb{R}^n$, we obtain a polynomial approximation of~\eqref{eq:ode} of degree $d$ by
\begin{align}
	\label{eq:poly-approx-taylor}
	\dot{x}_i (t) \approx \sum_{|\bm{\alpha}| \leq d} \frac{1}{\bm{\alpha}!}D^{\bm{\alpha}} f_i(\mathbf{x}_0) (\mathbf{x}-\mathbf{x}_0)^{\bm{\alpha}} \quad \text{for} \quad i\in[n]
\end{align}
where
\begin{equation*}
	D^{\bm{\alpha}} f_i := \frac{\partial^{\alpha_1 + \cdots + \alpha_n} f_i}{\partial x_1^{\alpha_1} \cdots \partial x_n^{\alpha_n}}.
\end{equation*}

As a proof of concept of the programs presented in Sect.~\ref{sec:finite_dimensional}, we restrict ourselves to the planar (two-dimensional) case, so that $\mathbf{x}=(\mathbf{r}, \mathbf{v})\in\mathbb{R}^4$. The Taylor expansion is performed about the initial condition $\mathbf{x}_0 = (\mathbf{r}_0, \mathbf{v}_0)$ with 
		$\mathbf{r}_0 =(6828\cdot 10^3, \, 0 )~\mathrm{m}$ 
		and  
		$\mathbf{v}_0 = (0 , \, 5.4\cdot 10^3 )~\mathrm{m\, s^{-1}}$. 
The resulting second-order Taylor approximation reads

\begin{align*}
	\dot{\mathbf{x}}(t) \approx \begin{bmatrix}
		v_x \\
		v_y \\
		-51.2981 + 1.00172\cdot 10^{-5}r_x - 5.50154\cdot 10^{-13}r_x^2 + 2.75077\cdot 10^{-13}r_y^2\\
		-5.0086\cdot 10^{-6}r_y + 5.50154\cdot 10^{-13}r_x r_y
	\end{bmatrix}.
\end{align*}
\vspace{0.2cm}

To mitigate ill-conditioning in the underlying SDP, we applied a scaling transformation to the states so that all sets of interest lie within the unit hypercube $[-1,1]^n$. The scaling vector is defined as
	$S_{\mathbf{x}} = 
	(
		7\cdot10^6, \, 
		3\cdot 10^6, \,
		5\cdot 10^3, \,
		5\cdot 10^3
	)$
such that the original state vector $\mathbf{x}$ relates to the scaled state $\tilde{\mathbf{x}}$ via the Hadamard (element-wise) product by $\mathbf{x} = \tilde{\mathbf{x}} \odot S_{\mathbf{x}}$.
Furthermore, the time horizon is normalized to $[0,1]$ from $\bm{T}=[0, \, 500]~\mathrm{s}$. Under these transformations, the scaled second-order polynomial dynamics take the form
\begin{align}
	\label{eq:second_order_approximation}
	\dot{\tilde{\mathbf{x}}}(t) \approx\begin{bmatrix}
		0.357143 \tilde{v}_x \\
		0.833333 \tilde{v}_y \\
		-5.12981 + 7.01205\tilde{r}_x - 2.69576\tilde{r}_1^2 + 0.247569\tilde{r}_y^2\\
		-1.50258\tilde{r}_y + 1.15532\tilde{r}_x \tilde{r}_y
	\end{bmatrix},
\end{align}
with $\tilde{\mathbf{x}}=(\tilde{r}_x, \tilde{r}_y, \tilde{v}_x, \tilde{v}_y)$.
\vspace{0.2cm}


With the baseline system and scaling established, we now proceed to investigate three distinct scenarios:
\begin{enumerate}
	\item \textbf{Fixed unsafe region with known moments:} We estimate the worst-case probability for a time-independent unsafe set $\bm{K}$, assuming knowledge of the first-order moments and partial information about the second-order moments of the initial state distribution.

	\item \textbf{Moment ambiguity:} Extending the first scenario, we incorporate ambiguity in the specified moment information itself, thereby accounting for possible inaccuracies or uncertainties in the moment data.

	\item \textbf{Time-dependent unsafe region:} Finally, we consider a time-varying unsafe set $\bm{K}$, which can be interpreted, for instance, as the spatial region over which a satellite is expected to pass during the mission time window.
\end{enumerate}

In all three cases, we assume that the initial distribution has a bounded support $\bm{X}_0$, which, in the scaled variables, takes the form
\begin{align*}
	\tilde{\bm{X}}_0 = \{ \tilde{\mathbf{x}}\in\mathbb{R}^4 \, : \,  0.1-\|\tilde{\mathbf{x}}-\tilde{\mathbf{x}}_0\|^2_2 \geq 0\}.
\end{align*}
Additionally, to ensure compactness of all measure-representable moment sequences, we impose the further restriction that all measures have support contained in 
\begin{align*}
	\tilde{\bm{X}}=\{ \tilde{\mathbf{x}}\in\mathbb{R}^4 \, : \, 4-\|\tilde{\mathbf{x}}\|^2_2 \geq 0\}.
\end{align*} 

\subsubsection*{Case 1. Fixed unsafe region with known moments}

As a first safety analysis, similar to Case 1 in Sect.~\ref{ex:1}, we search for a (tight) upper bound on the worst-case probability for a time-independent unsafe set $\bm{K}$, assuming partially known first- and second-order moments of the initial state distribution $\mu_0$.

We assume that the mean and some second-order noncentral moments of the initial measure $\mu_0$ are perfectly known, taking the values 
\begin{align}
	\label{eq:hadamard}
	\begin{aligned}
		b_{1000} = 0.9754, \quad\quad &b_{0100} = 0.000,\\
		b_{0010} = 0.0000, \quad\quad &b_{0001} = 1.080,\\
		b_{2000} = 1.0015, \quad\quad &b_{0200} = 0.010.  
	\end{aligned}
\end{align}
The unsafe set of interest $\bm{K}$ considered here is defined in the scaled variables as
\begin{align*}
	\bm{K} = \{ \tilde{\mathbf{x}} \in \mathbb{R}^4 \, : \, 0.01 - \|\tilde{\mathbf{r}}-\tilde{\mathbf{r}}_{\bm{K}}\|^2_2 \geq 0\}
\end{align*}
with $\tilde{\mathbf{r}}_{\bm{K}} = (0.81, 0.84)$. Fig.~\ref{fig:ex1_case1_illustration} illustrates the scenario under study, where the region $\bm{K}$, the nominal trajectory and randomly sampled trajectories taken from an arbitrary distribution with support in $\bm{X}_0$ that satisfies the moments~\eqref{eq:hadamard} are depicted.
The results obtained using the scaled second-order approximation~\eqref{eq:second_order_approximation} in to the moment program~\eqref{eq:sos_program} for relaxation orders $3, 4, 5, 6, 7, 8$ are presented in Fig.~\ref{fig:ex3_stats}.

\begin{figure}[H]
	\centering
	\includegraphics[width=0.65\textwidth]{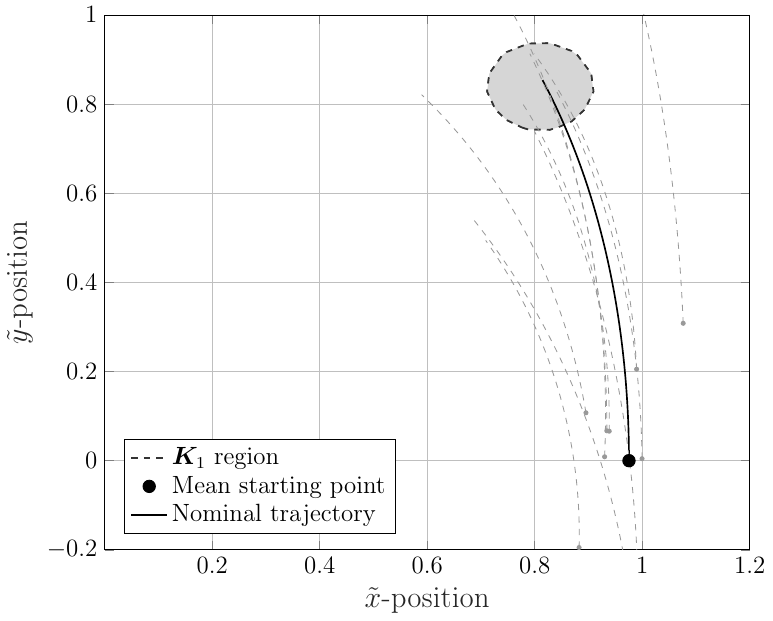}
	\caption{In-orbit safety verification illustration for Case 1 (Sect.~\ref{ex:3}). The nominal trajectory and sampled trajectories are plotted over the normalized time window $[0,1]$ along the scaled $(\tilde{x}, \tilde{y})$-position axes. The mean starting point and the $\bm{K}_1$ region are also indicated.}%
	\label{fig:ex1_case1_illustration}
\end{figure}

\begin{figure}[H]
	\centering
	\includegraphics[width=0.68\textwidth]{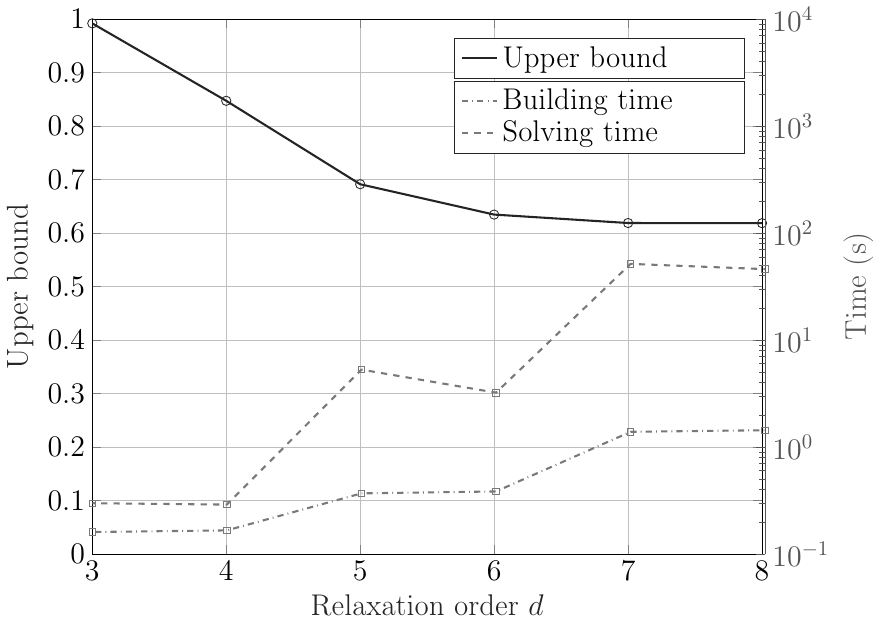}
	\caption{Upper bound on the worst-case probability for a time-independent unsafe region $\bm{K}$ under partially known noncentral moments (Case 1, Sect.~\ref{ex:3}). The left vertical axis shows the bound as a function of the relaxation order $d$, while the right vertical axis reports the associated building and solving times.}%
	\label{fig:ex3_stats}
\end{figure}

The following observations can be made from Fig.~\ref{fig:ex3_stats}:
increasing the relaxation order yields tighter estimates of the worst-case probability, seeming to start to stall at the relaxation order $d=7$, at around $0.6183$. However, this comes at a significant computational cost; although the building time of the underlying SDP is a bit more than 1 second, the solving time for this relaxation order is around $51.43$ seconds. If one would try even higher relaxation orders the computation would increase significantly. We can observe that the solving time evolves almost linearly in a logarithm scale.

Furthermore, we examined the program's behavior for three distinct regions of interest, each representing a small circular region in the position space. These regions are defined as:
\begin{align*}
	\begin{aligned}
		\bm{K}_1 &= \{ \tilde{\mathbf{x}} \in \mathbb{R}^4 \, : \, 0.01 - \|\tilde{\mathbf{r}}-\tilde{\mathbf{r}}_1\|^2_2 \geq 0 \} \\
		\bm{K}_2 &= \{ \tilde{\mathbf{x}} \in \mathbb{R}^4 \, : \, 0.01 - \|\tilde{\mathbf{r}}-\tilde{\mathbf{r}}_2\|^2_2 \geq 0 \} \\ 
		\bm{K}_3 &= \{ \tilde{\mathbf{x}} \in \mathbb{R}^4 \, : \, 0.01 - \|\tilde{\mathbf{r}}-\tilde{\mathbf{r}}_3\|^2_2 \geq 0 \}
	\end{aligned}
\end{align*}
with $\tilde{\mathbf{r}}_1=(0.81, \, 0.84)$, $\tilde{\mathbf{r}}_2=(1, \, 0.6)$, and $\tilde{\mathbf{r}}_3=(0.4, \, 0.6)$.
These regions are chosen to lie at different distances from the nominal trajectory, thereby allowing us to assess how the estimated upper bound on the worst-case probability vary with region of interest.
The results are summarized in Fig.~\ref{fig:multiple_Ks}.

\begin{figure}[H]
	\centering
	\includegraphics[width=0.75\textwidth, clip, trim={1.1cm, 0, 0, 0}]{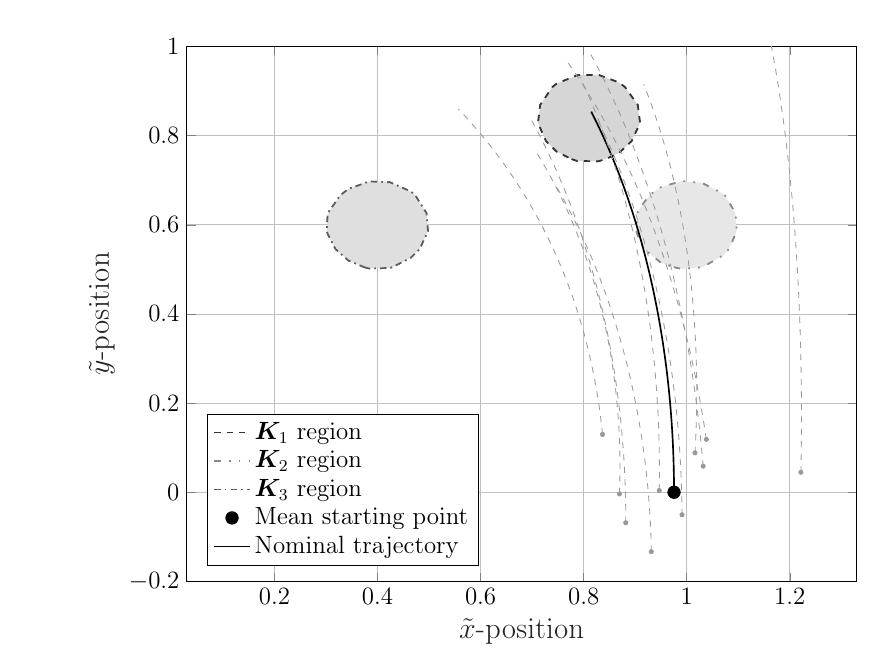}
	\caption{Illustration of the in-orbit safety verification for Case 1 (Sect.~\ref{ex:3}) with multiple unsafe regions. The nominal trajectory and sampled trajectories are plotted over the normalized time horizon $[0,1]$ in the scaled $(\tilde{x}, \tilde{y})$-position axes. The mean starting point and the regions $\bm{K}_1$, $\bm{K}_2$, and $\bm{K}_3$ are also marked.}%
	\label{fig:multiple_Ks_illustration}
\end{figure}
\begin{figure}[H]
	\centering
	\includegraphics[width=0.75\textwidth]{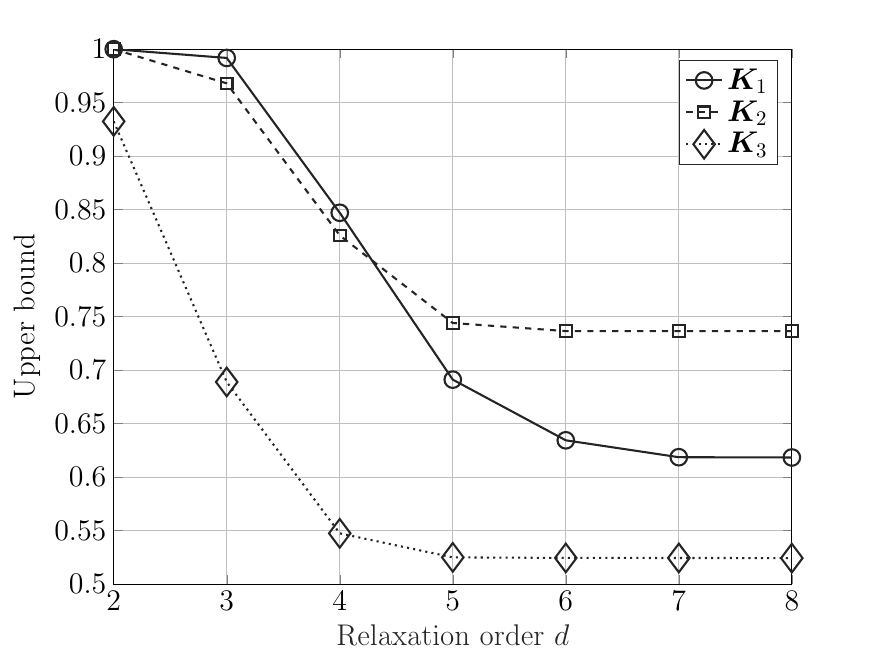}
	\caption{Upper bound on the worst-case probability for multiple time-independent unsafe regions $\bm{K}_1, \bm{K}_2,$ and $\bm{K}_3$ under partially known first and second-order noncentral moments (Case 1, Sect.~\ref{ex:3}). The bounds are plotted as a function of the relaxation order $d$.}%
	\label{fig:multiple_Ks}
\end{figure}

Several observations can be drawn from the results in Fig.~\ref{fig:multiple_Ks}. First, for the relaxation order $d=8$, the highest upper bound on the worst-case probability is achieved for $\bm{K}_2$ with $D_d^\star = 0.7364$; unsurprisingly, as this region lies close to the nominal trajectory. This aligns with the expectations: regions farther from the nominal trajectory generally yield lower probabilities.

A more striking result is observed for $\bm{K}_3$, which is located significantly away from the nominal trajectory in a sparsely visited area. Here, the estimated maximum probability drops substantially to $D_d^\star=0.5198$. However, this value is likely far from the true probability given the scenario; the bound on the probability may not be very tight, and the true value is expected to be considerably smaller.
This suggests the need to move to a higher level of the hierarchy. Yet, as the results from Fig.~\ref{fig:ex3_stats} indicate, doing so may impose a substantial additional computational burden, potentially rendering the approach unsuitable for real-time applications. Therefore, improved methods that minimize this overhead are desirable. 

\subsubsection*{Case 2. Moment ambiguity}

From a practical perspective, assuming perfectly known first- and second-order moments is too holistic, and might lead to undermining the safety of the system. To tackle this problem, similar to Case 2 in Sect.~\ref{ex:1}, we study the case where the moments are not perfectly known, but instead belong to some set, hence accounting for possible inaccuracies in the moments knowledge.

We consider the following arbitrarily chosen ambiguity sets
\begin{align*}
	\begin{aligned}
		b_{1000} &\in \left[\overline{b}_{1000}(1-\kappa), \, \overline{b}_{1000}(1+\kappa)\right] \\
		b_{0100} &\in \left[\overline{b}_{0100}(1-\kappa), \, \overline{b}_{0100}(1+\kappa)\right] \\
		b_{0010} &\in \left[\overline{b}_{0010}(1-\kappa), \, \overline{b}_{0010}(1+\kappa)\right] \\
		b_{0001} &\in \left[\overline{b}_{0001}(1-\kappa), \, \overline{b}_{0001}(1+\kappa)\right] \\
		b_{2000} &\in \left[\overline{b}_{2000}(1-\kappa), \, \overline{b}_{2000}(1+\kappa)\right] \\
		b_{0200} &\in \left[\overline{b}_{0200}(1-\kappa), \, \overline{b}_{0200}(1+\kappa)\right] \\
	\end{aligned}
\end{align*}
with 
\begin{align*}
	\begin{aligned}
		&\overline{b}_{1000} = 0.9754,
		\qquad \qquad &\overline{b}_{0100} = 0.00,
		\qquad \qquad &\overline{b}_{2000} = 1.0015,\\
		&\overline{b}_{0010} = 0.00,
		\qquad \qquad&\overline{b}_{0001} = 1.08,
		\qquad \qquad&\overline{b}_{0200} = 0.01,\\
	\end{aligned}
\end{align*}
for $\kappa\in \{0, 0.001, 0.002, 0.005, 0.008\}$. The results of applying the finite-dimensional program~\eqref{eq:sos_program} to system~\eqref{eq:second_order_approximation} considering $\bm{K}_1$ and all the 5 possible values of $\kappa$, i.e., the nominal case with zero ambiguity and 4 cases with increased ambiguity, is presented in Fig.~\ref{fig:ex3_ambiguity}.

\begin{figure}[H]
	\centering
	\includegraphics[width=0.75\textwidth]{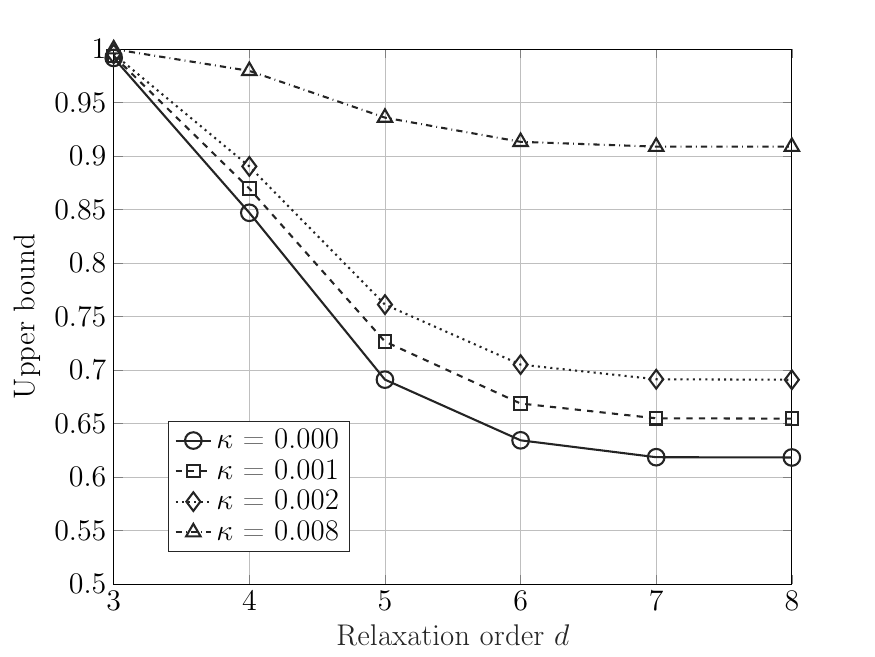}
	\caption{Upper bound on the worst-case probability for varying ambiguity set sizes, parameterized by $\kappa$ (Case 2, Sect.~\ref{ex:3}). The bounds are plotted over the relaxation order $d$.}%
	\label{fig:ex3_ambiguity}
\end{figure}

\clearpage
From Fig.~\ref{fig:ex3_ambiguity}, we observe that after the relaxation order $d=7$, the upper bound estimation through the finite-dimensional program seems to stabilize. Moreover, we can clearly see the influence of small uncertainties in the moments' information, and for the case of $\kappa=0.008$, the upper bound is reasonably high, at around $0.9089$. 

\subsubsection*{Case 3. Time-dependent unsafe region}

In this final case, we consider a more realistic settings in which the unsafe region $\bm{K}$ is not time-independent but rather it evolves in time. Specifically, we model $\bm{K}$ as a ball that moves according to a prescribed velocity, and in the scaled variables we define
\begin{align*}
	\bm{K} := \{ (\tilde{\mathbf{x}}, \tilde{t}) \in \mathbb{R}^4 \times \mathbb{R}_{\geq 0} : 0.005- \| \tilde{\mathbf{r}}-\tilde{\mathbf{r}}_0-\mathfrak{v}\tilde{t}\|_2^2 \geq 0\}
\end{align*}
with $\mathfrak{v}= (-0.1, \, -0.1)$ and $\tilde{\mathbf{r}}_0=(0.81,0.84)$. This setting is depicted in Fig.~\ref{fig:ex3_time_variant}, where three snapshots at normalized times $\tilde{t}\in\{0,0.5,1\}$ are shown.

\begin{figure}
	\centering
	\includegraphics[width=1\textwidth]{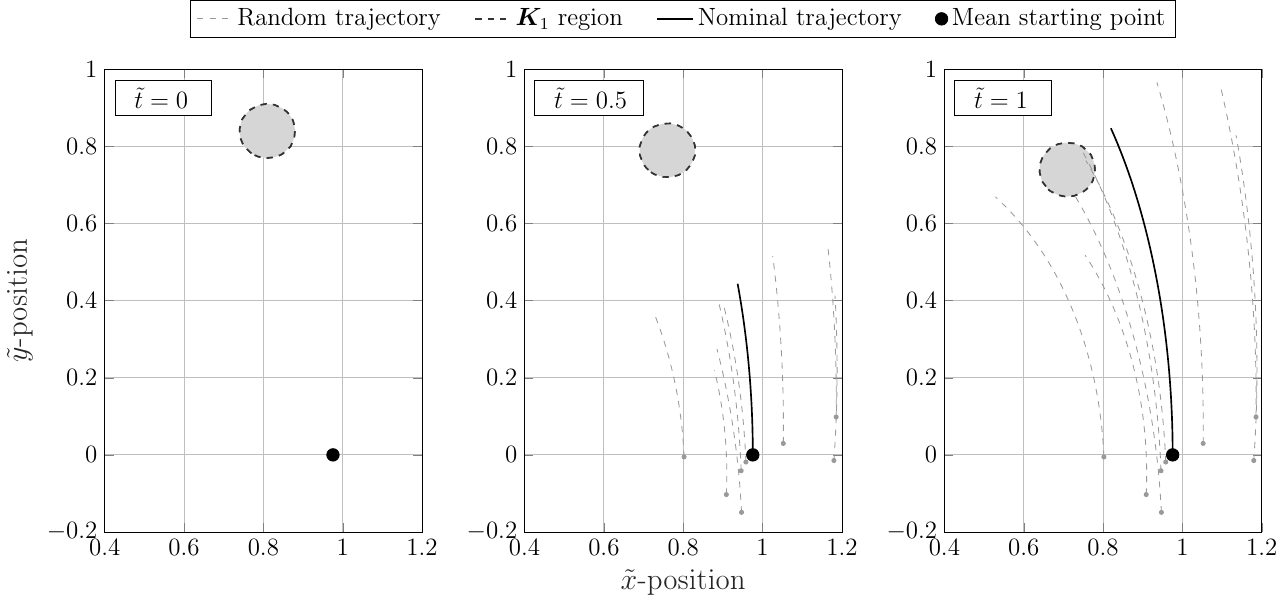}
	\caption{Illustration of the moving unsafe region $\bm{K}$ for the in-orbit safety verification problem (Case 3, Sect.~\ref{ex:3}). The region is shown as a ball translated over time according to $\tilde{\mathbf{r}}_0+\mathfrak{v}\tilde{t}$, with snapshots at $\tilde{t}=0,0.5,1$. Also shown are the nominal trajectory, random sampled trajectories, and the mean starting point of the initial measure.}%
	\label{fig:ex3_time_variant}
\end{figure}

To adapt the programs presented in the earlier sections, we extend the measure $\mu_\mathrm{a}$ so that its support lies in $\bm{K}\subset \bm{X} \times \bm{T}$, and we impose the modified constraint
\begin{align*}
	\mu_\mathrm{T} -\mu_\mathrm{a} \in \mathcal{M}_+(\bm{X}\times \bm{T}).	
\end{align*}
The results from applying the finite-dimensional program~\eqref{eq:sos_program} with these minor modifications are reported in Fig.~\ref{fig:moving-K}.
\begin{figure}[H]
	\centering
	\includegraphics[width=0.75\textwidth]{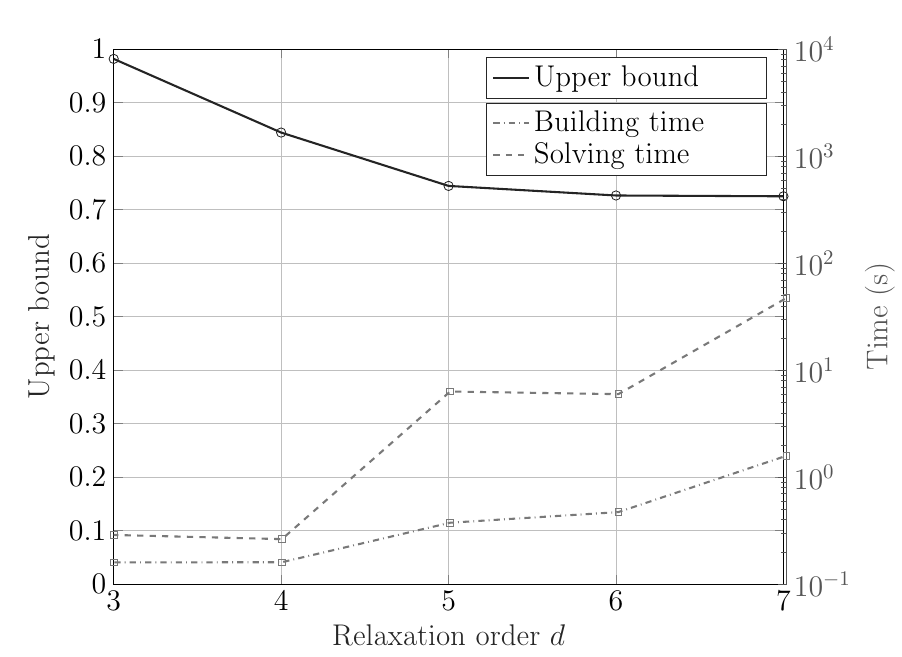}
	\caption{Upper bound on the worst-case probability for a time-varying unsafe region $\bm{K}$ under partially known noncentral moments (Case 3, Sect.~\ref{ex:3}). The left vertical axis shows the bound as a function of the relaxation order $d$, while the right vertical axis reports the associated building and solving times.}%
	\label{fig:moving-K}
\end{figure}

As in the time-independent case, the upper bound estimate reaches a plateau after relaxation order $d=7$, stabilizing at approximately $0.72$, and requiring only $47.3$ seconds of computation.

Beyond the numerical bound itself, the solution of the underlying SDP offers additional insight. In particular, we can extract the first-order pseudomoment with respect to time of the terminal measure obtained from~\eqref{eq:moment-program}, here denoted as $(\mu^\star_\mathrm{T})_d$ for each order $d$. 
This quantity is of special interest because, under the assumption that the worst-case probability is attained at a single instant, i.e., the extremal terminal measure takes the form $\delta_{t^\star} \otimes \nu$, with $\nu \in \mathcal{M}_+(\bm{X})$, this pseudomoment provides a direct estimate of the critical time $t^\star$.
As the hierarchy of relaxations converges, we have the limit
\begin{align*}
	\lim_{d\to \infty} \, \langle t, (\mu^\star_\mathrm{T})_d \rangle \underset{}{\longrightarrow} \langle t, \delta_{t^\star} \otimes \nu \rangle = t^\star,
\end{align*}
which justifies using this moment as an approximation of the worst-case time instance. For relaxation order $d=7$, the estimated value of $t^\star$ is $0.7180$. Thus, even when the bound stagnates, the moment data can reveal meaningful information about the scenario, rather than just the upper bound on the worst-case probability. 

\clearpage
\section{Conclusion}%
\label{sec:conclusion}

In this paper, we addressed the problem of estimating upper bounds on the worst-case probability that a dynamical system enters an unsafe region over a finite-time horizon, given partial information on the initial state in the form of noncentral moments. \replaced{This problem is particularly important in safety-critical systems such as in-orbit operations, where decision must be made ensuring the system is safe against any uncertain object.}{}
Leveraging measure-theoretic techniques, we formulate a convex infinite-dimensional linear programming problem over measures, along with its dual\textemdash an infinite-dimensional functional optimization program. We demonstrate that both problems can be relaxed into finite-dimensional semidefinite programs via Lasserre's moment-sum-of-squares hierarchy. Under mild regularity assumptions, we established convergence of the optimal values of these relaxed problems to the true worst-case probability of the original problem.

Furthermore, we extend our framework to accommodate uncertainty in the moment data itself and illustrate the practical implications of this extension through two simple numerical examples. Across all case studies, we observe that the upper-bound estimates stabilize on a reasonably small relaxation order, yielding both safe and informative assessments of system safety and risk.

Several directions warrant further investigation. First, for real-time risk assessment, warm-starting the SDP solvers\replaced{, if possible,}{} with prior solutions could reduce computation in sequential monitoring. Second, the programs developed require polynomial dynamics, but as seen in Sect.~\ref{ex:3} related to an object in-orbit, its own dynamics are not polynomial but rational, and required some polynomial approximation. These may compromise the rigor of the obtained bounds; in Appendix~\ref{sec:appendix_C} a procedure to reformulate the problem into the presented framework is shown, but it increases the size of the problem significantly, and some experiments have shown that we require a significantly large relaxation order to have some meaningful result. Future work should attempt to make this approach more tractable. Third, the optimization problems presented can be adapted for control synthesis, enabling policies that actively minimize the probability of unsafe encounters. Finally, complexity remains a major hurdle: high-order relaxations are computationally expensive.  
To address this, exploiting sparsity or adopting the sublevel Moment-SOS hierarchy from~\cite{chen_sublevel_2022-1} could yield tighter bounds with moderate computational increases, paving the way toward practical deployment.

\backmatter

\bmhead{Acknowledgements}

We would like to thank Jan Olucak for reviewing the paper and for the fruitful discussions that helped improve this paper.

\bmhead{Author contributions}

Renato Loureiro: Conceptualization, Methodology, Software, Investigation, Writing -- Orignal Draft. Torbj\o rn Cunis: Conceptualization, Writing -- review \& editing, and Supervision.


\bmhead{Conflict of interest} The authors declare no conflict of interest.

\bmhead{Code availability} The code is available in~\href{https://doi.org/10.18419/DARUS-6410}{https://doi.org/10.18419/DARUS-6410}.

\clearpage
\begin{appendices}

\section{Proof extension}%
\label{sec:appendix_A}

\begin{proof}[Complementary proof of~Theorem~\ref{theorem:functional}.]
	To obtain the dual program of~\eqref{eq:measure-program}, we associate Lagrange multipliers with each constraint in~\eqref{eq:measure-program}: $\omega \in \mathcal{C}(\bm{T}\times \bm{X})$ for the domination constraint~\eqref{eq:domination_constraint}; $\bm{\gamma}=(\gamma_{\alpha})_{\bm{\alpha}\in\bm{A}}\in \mathbb{R}^{|\bm{A}|}$ for each moment constraint $\langle \mu_0, \mathbf{x}^{\bm{\alpha}} \rangle = b_{\bm{\alpha}}, \, \bm{\alpha} \in \bm{A}$; $\nu\in \mathbb{R}$ for the mass normalization constraint $\langle \mu_0, 1 \rangle=1$. 
	
	The Liouville equation~\eqref{eq:liouville_short_free} is imposed weakly: for all test functions $\omega\in \mathcal{C}(\bm{T} \times \bm{X})$, 
	\begin{align*}
		\langle \mathcal{L}_f\omega, \mu \rangle = \langle \omega, \mu_\mathrm{T} - \delta_0 \otimes \mu_0 \rangle. 
	\end{align*}
	
	The Lagrangian $\mathfrak{L}$, after rearranging terms to isolate the measures $(\mu_\mathrm{a}, \mu_\mathrm{T}, \mu, \mu_0)$ in the duality pairing, reads
	\begin{align*}
		\begin{aligned}
			\mathfrak{L} = \langle 1 -\omega, \mu_\mathrm{a}\rangle + \langle \omega, \pi_\#^\mathbf{x} \mu_\mathrm{T} \rangle + \langle \mathcal{L}_f \omega, \mu \rangle + \left\langle \omega(\cdot,0)+\nu+\sum_{\bm{\alpha}\in\bm{A}} \gamma_{\bm{\alpha}} \mathbf{x}^{\bm{\alpha}}, \mu_0 \right\rangle
			-\nu - \sum_{\bm{\alpha}\in \bm{A}} \gamma_{\bm{\alpha}} b_{\bm{\alpha}}.
		\end{aligned}
	\end{align*}

	For the supremum over the nonnegative measures $(\mu_\mathrm{a}, \mu_\mathrm{T}, \mu, \mu_0)$ to be finite, the functions in each duality pairing must be nonpositive in the support of the corresponding measure in the duality pairing. This yields the dual feasibility conditions: $1-\omega\leq 0$ on $\bm{K} \times \bm{T}$; $\omega \geq 0$ and $\mathcal{L}_f\omega \leq 0$ on $\bm{T} \times \bm{X}$, and $\omega(\mathbf{x},0)+\nu+\sum_{\bm{\alpha} \in \bm{A}} \gamma_{\bm{\alpha}} b_{\bm{\alpha}} \leq 0$ on $\bm{X}_0$. 
	
	Under these constraints, the supremum of the Lagrangian over all admissible measures is 
	\begin{align*}
		\sup_{\mu_0, \mu_\mathrm{T}, \mu, \mu_\mathrm{a}} \mathfrak{L} = -\nu - \sum_{\bm{\alpha \in \bm{A}}} \gamma_{\bm{\alpha}} b_{\bm{\alpha}}.
	\end{align*}
	The dual problem therefore consists of minimizing this expression subject to the derived constraints, which is exactly the functional program~\eqref{eq:functional_program}.
\end{proof}

\section{Monte-Carlo}%
\label{sec:appendix_B}

Here we briefly describe the basic Monte Carlo simulation approach alongside its limitations. For more details, see for instance~\cite{tempo_randomized_2005}.

Let $(\bm{\Omega}, \bm{F}, \mu)$ be a probability space, and let $\mathbf{x}^1, \ldots, \mathbf{x}^s \in \mathbb{R}^n$ be $s\in \mathbb{N}$ independent samples drawn from $\mu$. Define the indicator functional 
\begin{align*}
	\mathbf{1}_{\bm{K}}:\mathbb{R}^n \to \{0,1\}, \quad \mathbf{x}\mapsto \begin{cases} 
		1 \quad \text{if} \quad \mathbf{x} \in \bm{K} \\
		0 \quad \text{otherwise}.
	\end{cases}
\end{align*}
where $\bm{K}\subset \mathbb{R}^n$ is a Borel-measurable set of interest. The empirical measure is
\begin{align*}
	\mu_{s} := \frac{1}{s}\sum_{i=1}^s \delta_{\mathbf{x}^i},
\end{align*}
and the Monte Carlo estimator of the true probability $\mu(\bm{K})=\int \mathbf{1}_{\bm{K}}(\mathbf{x})\mathrm{d}\mu(\mathbf{x})$ is
\begin{align*}
	\hat{\mu}_{s}(\bm{K}):= \int \mathbf{1}_{\bm{K}}(\mathbf{x}) \, \mathrm{d}\mu_{s}(\mathbf{x}) = \frac{1}{s} \sum_{i=1}^s \mathbf{1}_{\bm{K}}(\mathbf{x}^i).
\end{align*}

\clearpage

By the strong law of large numbers $\hat{\mu}_s(\bm{K}) \overset{\mathrm{a.s.}}{\underset{}{\longrightarrow}} \mu(\bm{K})$ as $s\to \infty$. However, the convergence is almost sure, not uniform in $\bm{K}$, and provides no deterministic guarantee for finite $s$. In particular, for any finite $s$ and any $\epsilon>0$, there exists a set of realizations with positive (though possibly small) probability such that $|\hat{\mu}_s(\bm{K})-\mu(\bm{K})| > \epsilon$.

Consequently, the estimate $\hat{\mu}_s(\bm{K})$ is neither a certified upper nor lower bound on $\mu(\bm{K})$; it is merely an asymptotically consistent, but inherently stochastic, approximation. Moreover, by the Central Limit theorem, the normalized error satisfies
\begin{align*}
	\sqrt{s}(\hat{\mu}_s \left(\bm{K})-\mu(\bm{K}) \right) \underset{s\to \infty}{\longrightarrow} \mathcal{N}(0,\sigma^2)
\end{align*}
with asymptotic variance $\sigma^2:=\int (\mathbf{1}_{\bm{K}}(\mathbf{x})-\mu(\bm{K}))^2 \,  \mathrm{d}\mu(\mathbf{x})$. 
Thus, one may construct asymptotic confidence intervals, but these are probabilistic and not rigorous enclosures.

When applied to the problem setting of Sect.~\ref{sec:problem_setup}, the direct Monte Carlo approach suffers from three structural limitations:
\begin{enumerate}
	\item The method requires a fixed initial probability measure $\mu_0$ on the state space, satisfying prescribed moment constraints. However, the worst-case probability over the set of admissible initial measures is not attained by the chosen $\mu_0$. 
	\item The sample-based estimate $\hat{\mu}_s(\bm{K})$ is not a guaranteed upper bound; only probabilistic statements (e.g., “with high probability”) are available.
	\item Since the problem of interest involves a dynamical system over a continuous time horizon $\bm{T}=[0,T]$, a naive strategy would be to discretize time into a grid $0=t_0 < t_1 < \cdots < t_m = T$, propagate each sample $\mathbf{x}^i(t_k)$ through the dynamics, and evaluate $\mathbf{1}_{\bm{K}}$ at each time step. This introduces both temporal discretization error and a curse of dimensionality, as the empirical measure at each time must be estimated separately.
\end{enumerate}

Thus, while Monte Carlo simulation provides an efficient and asymptotically consistent estimator, it lacks the deterministic certification required for rigorous verification in safety-critical dynamical systems.

\section{Rational dynamical systems}%
\label{sec:appendix_C}

The theoretical framework developed in Sect.~\ref{sec:infinite_dimensional_program}-\ref{sec:finite_dimensional} rests on the fundamental assumption that the system dynamics are polynomial in the state and time variables, as stated in Assumption~\ref{ass:poly_dynamics}. While this assumption is satisfied for a broad class of systems, many applications of practical interest involve rational vector fields. In this section, we extend the occupation measure-based approach to accommodate dynamical systems governed by rational differential equations of the form
\begin{align}
	\label{eq:rational_dynamics}
	\dot{\mathbf{x}}(t) = f(\mathbf{x}, t) = \frac{p(\mathbf{x}, t)}{q(\mathbf{x}, t)}, \qquad (\mathbf{x}, t) \in \bm{X} \times \bm{T},
\end{align}
where $p\in\mathbb{R}[\mathbf{x}, t]^n$ and $q$ is an algebraic function in $(\mathbf{x}, t)$. Here we follow the same rationale from some previous works, where, by adding lifting variables and new equality constraints, a moment-SOS program is obtained, circumventing the rational dynamics.

To embed~\eqref{eq:rational_dynamics} into the polynomial framework, we introduce an auxiliary variable $u \in \bm{U} \subset \mathbb{R}$ and seek a polynomial function $\phi\in \mathbb{R}[\mathbf{x}, u, t]$ satisfying the equivalence
\begin{align}
	\dot{\mathbf{x}}(t) = \frac{p(\mathbf{x}, t)}{q(\mathbf{x}, t)}
	\quad \Longleftrightarrow \quad
	\dot{\mathbf{x}}(t) = \zeta(\mathbf{x}, u, t) \quad \text{s.t.} \quad \phi(\mathbf{x}, u, t) = 0,
\end{align}
for some suitable $\zeta \in \mathbb{R}[\mathbf{x},u,t]^n$.
Under this construction, the rational dynamics are exactly recovered on the algebraic manifold
\begin{equation*}
	\bm{\Phi} = \left\{(\mathbf{x}, u, t) \in \bm{X} \times \bm{U} \times \bm{T} : \phi(\mathbf{x}, u, t) = 0 \right\}.
\end{equation*}

The resulting system is thus a polynomial system evolving on a manifold. To extend our occupation measure framework to this setting, we lift the occupation measure to the augmented space by defining
$
	\mu \in \mathcal{M}_+(\bm{X} \times \bm{T} \times \bm{U}),
$
where the additional coordinate captures the manifold restriction. The Liouville equation~\eqref{seq:liouv} in the original measure program~\eqref{eq:measure-program} must be reformulated to account for the augmented state space. Specifically, we replace constraint~\eqref{seq:liouv} with
\begin{align}
	\label{eq:liouv-rational}
	\mu_\mathrm{T} = \delta_0 \otimes \mu_0 + \mathcal{L}^\dagger_{\zeta} \pi_{\#}^{\mathbf{x},t} \mu,
\end{align}
applying the $(\mathbf{x},t)$-marginalization to the occupation measure $\mu$ (see Sect.~\ref{sec:measure_program}).
To ensure that the lifted measure is supported on the algebraic manifold $\bm{\Phi}$, we impose the additional linear constraint
	$\mu \in \mathcal{M}_+(\bm{\Phi})$.
The complete augmented measure program for the rational dynamics then takes the form
\begin{subequations}
	\label{eq:measure_rational}
	\begin{align}
	\sup_{\mu, \mu_0, \mu_\mathrm{T}, \mu_\mathrm{a}} \quad & \langle 1, \mu_\mathrm{a} \rangle  \\
	\text{s.t.} \quad \quad \, & \mu_\mathrm{T} = \delta_0 \otimes \mu_0 + \mathcal{L}^\dagger_{\zeta} \pi_{\#}^{\mathbf{x},t} \mu, \\
	& \mu \in \mathcal{M}_+(\bm{X} \times \bm{T} \times \bm{U} \cap \bm{\Phi}), \label{seq:support_restriction_Phi} \\ 
	&\mu_\mathrm{T} \in \mathcal{M}_+(\bm{X} \times \bm{T}), \\
	& \eqref{seq:probability_1}\textendash\eqref{seq:mu_a_K}. 
	\end{align}
\end{subequations}

\begin{assumption}%
	\label{ass:strictly_sign_on_denominator}
	The denominator of the dynamics~\eqref{eq:rational_dynamics}, i.e., $q(\mathbf{x},t)$, is strictly negative or strictly positive on $\bm{X}\times \bm{T}$. 
\end{assumption}

\begin{theorem}
	For systems governed by the rational dynamics~\eqref{eq:rational_dynamics}, Program~\eqref{eq:measure_rational} yields a valid upper bound on the optimal value of Problem~\ref{prob:main}, provided that Assumptions~\ref{ass:compactness} and~\ref{ass:strictly_sign_on_denominator} hold.
\end{theorem}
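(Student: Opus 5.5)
The plan follows the feasibility-mapping argument of Theorem~\ref{theorem:measure_program}. We show that every feasible pair $(\mu_0,\tau)$ of Problem~\ref{prob:main} for the rational dynamics~\eqref{eq:rational_dynamics} induces a feasible point of~\eqref{eq:measure_rational} with the same objective value. Taking suprema then gives that the value of~\eqref{eq:measure_rational} is at least $\mathfrak{P}^\star$.

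The first step is to construct the lifting explicitly. By Assumption~\ref{ass:strictly_sign_on_denominator}, $q$ does not vanish on the compact set $\bm{X}\times\bm{T}$, so $1/q$ is continuous there and bounded. We take $u = 1/q(\mathbf{x},t)$, or, if $q$ is merely algebraic, $u=q(\mathbf{x},t)$ encoded through its defining polynomial relation. We set $\phi(\mathbf{x},u,t)=u\,q(\mathbf{x},t)-1$ (respectively the algebraic relation) and $\zeta(\mathbf{x},u,t)=p(\mathbf{x},t)\,u$. The set $\bm{U}$ is chosen as a compact interval containing the range of $1/q$ on $\bm{X}\times\bm{T}$, which exists by compactness and the strict sign condition.

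The key property is that on $\bm{\Phi}$ the variable $u$ is uniquely determined as a continuous function $u^\star(\mathbf{x},t)$. Hence $\zeta(\mathbf{x},u^\star(\mathbf{x},t),t)=f(\mathbf{x},t)$, and $\bm{\Phi}$ is the graph of $u^\star$ over $\bm{X}\times\bm{T}$. We expect this uniqueness, together with the choice of a compact $\bm{U}$, to be the main obstacle. For a general algebraic $q$, the relation $\phi=0$ could have several real branches, and we must restrict $\bm{U}$ (using the sign of $q$) so that only the correct branch lies in $\bm{X}\times\bm{T}\times\bm{U}$.

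Next, given a feasible $(\mu_0,\tau)$, let $\psi(\cdot\mid\mathbf{x}_0)$ be the trajectories of~\eqref{eq:rational_dynamics}. These are well defined and remain in $\bm{X}$ on $[0,\tau]$ by Assumption~\ref{ass:compactness}, since $f$ is smooth where $q\neq0$. We define the lifted occupation measure by
\begin{align*}
\langle v,\mu\rangle=\int_{\bm{X}_0}\int_0^\tau v\bigl(\psi(t\mid\mathbf{x}_0),t,u^\star(\psi(t\mid\mathbf{x}_0),t)\bigr)\,\mathrm{d}t\,\mathrm{d}\mu_0(\mathbf{x}_0).
\end{align*}
The terminal measure $\mu_\mathrm{T}$ and the measure $\mu_\mathrm{a}$ are defined exactly as in the proof of Theorem~\ref{theorem:measure_program}. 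By construction, $\mu$ is nonnegative and supported on the graph, so~\eqref{seq:support_restriction_Phi} holds.

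To verify the lifted Liouville constraint, we integrate the chain rule~\eqref{eq:chain_rule} along trajectories for a test function $v\in\mathcal{C}^1(\bm{T}\times\bm{X})$. Along the lifted curve, $\zeta$ evaluated at $u^\star$ equals $f$. Therefore $\langle\mathcal{L}_f v,\pi^{\mathbf{x},t}_\#\mu\rangle$ coincides with the $\zeta$-weighted Lie derivative integrated against $\mu$. More precisely, we interpret $\mathcal{L}_\zeta^\dagger\pi_\#^{\mathbf{x},t}\mu$ as pairing $v$ with $\partial_t v+\nabla v\cdot\zeta$ integrated over $\mu$, so that the $u$-dependence is evaluated on $\bm{\Phi}$; we fix this reading of the notation first. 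This yields $\langle v,\mu_\mathrm{T}\rangle-\langle v(0,\cdot),\mu_0\rangle=\langle \partial_tv+\nabla v\cdot\zeta,\mu\rangle$.

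The remaining constraints~\eqref{seq:probability_1}--\eqref{seq:mu_a_K} are inherited unchanged from the polynomial case, and the objective value $\langle1,\mu_\mathrm{a}\rangle$ equals the unsafe probability at time $\tau$. Taking the supremum over feasible $(\mu_0,\tau)$ gives the claimed upper bound.
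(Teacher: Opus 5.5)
Your proposal is correct and follows the same route as the paper, whose proof only says that the feasibility-mapping argument of Theorem~\ref{theorem:measure_program} carries over. Your lifted occupation measure on the graph of $u^\star$, and your reading of $\mathcal{L}_\zeta^\dagger\pi_\#^{\mathbf{x},t}\mu$ as pairing $\partial_t v+\nabla v\cdot\zeta$ against $\mu$ itself, supply the details the paper omits. Two small corrections: uniqueness of $u$ on $\bm{\Phi}$ is not actually needed for an upper bound, since spurious branches only enlarge the relaxed feasible set and you only need some selection $u^\star\in\bm{U}$ with $(\mathbf{x},u^\star,t)\in\bm{\Phi}$ and $\zeta=f$ there; and in the algebraic case you should lift $u=1/q$ (again algebraic), not $u=q$, because otherwise $\zeta=pu$ equals $pq$ rather than $p/q$.
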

\begin{proof}
	The claim follows by a direct application of the same reasoning employed in the proof of Theorem~\ref{theorem:measure_program}. Hence, the details are omitted for brevity.
\end{proof}

Similar to the procedure taken in Sect.~\ref{sec:moment_program}, we relax~\eqref{eq:measure_rational}, creating a sequence of finite-dimensional problems, i.e., a Lasserre hierarchy, making use of the moment and localizing matrix presented in Sect.~\ref{sec:moment_program}. Note that the new support restriction~\eqref{seq:support_restriction_Phi} consists of a manifold, hence in terms of moments constraints, this translates into the linear constraints
\begin{equation}
	\forall (\boldsymbol{\alpha}, \beta, \gamma) \in \mathbb{N}^{n+2}_{2d - \deg(\phi)}: \quad L_{\mathbf{y}}(\phi(\mathbf{x}, u, t) \, \mathbf{x}^{\boldsymbol{\alpha}} u^{\beta} t^{\gamma}) = 0,
\end{equation}
for the relaxation order $d$.

The moment program for each relaxation order $d$ reads
\begin{subequations}
	\label{eq:moment-program-rational}
	\begin{align}
		P^\star_d=\sup_{\mathbf{y}, \mathbf{y}_0, \mathbf{y}_\mathrm{T}, \mathbf{y}_\mathrm{a}} \quad & L_{\mathbf{y}_\mathrm{a}}(1) \\
		\mathrm{s.t.} \qquad &  \forall (\bm{\alpha},\beta) \in \mathbb{N}_{\leq 2d}^{n+1}: \quad \operatorname{Liouv}^u_{\bm{\alpha}, \beta}(\mathbf{y}_0, \mathbf{y}, \mathbf{y}_\mathrm{T}) = 0  \\
		& \forall (\boldsymbol{\alpha}, \beta, \gamma) \in \mathbb{N}^{n+2}_{2d - \deg(\phi)}: \quad L_{\mathbf{y}}(\phi(\mathbf{x}, u, t) \, \mathbf{x}^{\boldsymbol{\alpha}} u^{\beta} t^{\gamma}) = 0 \\
		& \eqref{seq:initial_moments}\textendash\eqref{seq:constraint_mu0_moments}
	\end{align}
\end{subequations}
where $\operatorname{Liouv}^u_{\bm{\alpha}, \beta}(\cdot)$ encodes the augmented Liouville equation~\eqref{eq:liouv-rational}, in terms of the pseudo-moments $(\mathbf{y}_0, \mathbf
{y}, \mathbf{y}_\mathrm{T})$, i.e.,
\begin{align}
	\langle \mathbf{x}^{\bm{\alpha}} t^{\beta}, \mu_\mathrm{T} \rangle -\langle \mathbf{x}^{\bm{\alpha}} t^{\beta}, \delta_0 \otimes \mu_0\rangle - \langle \mathcal{L}_\zeta(\mathbf{x}^{\bm{\alpha}} t^{\beta}), \pi_{\#}^{\mathbf{x},t}\mu\rangle = 0.
\end{align}

The introduction of the auxiliary variable $u$ increases the dimension of the state from $n$ to $n+1$, which inevitably increases the computational complexity of the resulting moment-SOS hierarchy; this hinders the application of this approach to the object in-orbit example, which suits this technique, since the dynamics are rational. 

In summary, the occupation measure framework is readily extensible to rational dynamical systems via an algebraic lifting to a higher-dimensional space, at the cost of introducing additional variables and linear equality constraints encoding the manifold structure. This extension preserves the convexity and the asymptotic convergence guarantees of the original hierarchy, while significantly broadening the class of systems amenable to analysis.

\end{appendices}


\bibliography{sn-bibliography}

\end{document}